\documentclass{amsart}
\usepackage{amssymb}
\setlength{\textwidth}{15.5cm}
\setlength{\textheight}{20cm}
\setlength{\oddsidemargin}{0.5cm}
\setlength{\evensidemargin}{0.5cm}
\usepackage{graphicx}
\usepackage[usenames]{color}
\newtheorem{ass}{Assumption}

\usepackage[colorlinks = true,
        urlcolor  = blue]{hyperref}
        \usepackage{xcolor}
\usepackage{cancel}

\usepackage{multirow} 
\usepackage{algorithm}
\usepackage{algpseudocode}

\parindent0pt
\parskip4pt

\def\Pr{\mathbb{P}}

\def\eps{\varepsilon}

\newcommand\indiq{{\bf 1}}
\newcommand\n{\mathbb{N}}
\renewcommand\r{\mathbb{R}}
\newcommand\R{\r}
\newcommand\E{\mathbb{E}}

\newcommand\N{\n}

\newtheorem{theorem}{Theorem}[section]
\newtheorem{proposition}[theorem]{Proposition}

\newtheorem{remark}[theorem]{Remark}

\newcommand{\bdc}{\color{black}} 
\newcommand{\ec}{\color{black}}

\usepackage{xcolor}

\usepackage{subcaption}

\begin{document}
\title[Kernel estimation]{Nonparametric estimation of the jump rate in mean field interacting systems of neurons}
\author{Aline Duarte, K\'admo Laxa, Eva L\"ocherbach, Dasha Loukianova}
\address{A. Duarte: Departamento de Estatística, Instituto de Matem\'atica e Estat\'istica, Universidade de São Paulo, Rua do Matão 1010, 05508-090, Brazil.\\
E-mail: alineduarte@usp.br}
\address{K. Laxa: Faculdade de Filosofia, Ciências e Letras de Ribeirão Preto, Universidade de São Paulo, Av. Bandeirantes, 3900,
Ribeirão Preto-SP, 14040-901, Brazil.\\
E-mail: kadmo.laxa@usp.br}
\address{E. L\"ocherbach: CMAP, 
Ecole Polytechnique, France.\\
E-mail: eva.loecherbach@polytechnique.edu}
\address{D.Loukianova: LaMME, Universit\'e Evry-Paris-Saclay, UMR CNRS 8071, France.\\
E-mail:dasha.loukianova@univ-evry.fr}

\begin{abstract}
We consider finite systems of $N$ interacting neurons described by non-linear Hawkes processes in a mean field frame. Neurons are described by their membrane potential. They spike randomly, at a rate depending on their potential. In between successive spikes, their membrane potential follows a deterministic flow. We estimate the spiking rate function based on the observation of the system of $N$ neurons over a fixed time interval $[0,t]$. \bdc{Asymptotic results are derived in the regime where $N,$ the number of neurons, tends to infinity.}\ec  We introduce a kernel estimator of Nadaraya-Watson type and discuss its
asymptotic properties with help of the deterministic dynamical system describing the mean field limit.  We compute
the minimax rate of convergence in an  $L^2 -$error loss over a range of H\"older classes and obtain the classical rate of convergence $ N^{ - 2\beta/ ( 2 \beta + 1)} , $ where $ \beta $ is the regularity of the unknown spiking rate function.
\end{abstract} 

\maketitle
\textbf{Keywords}: spiking neurons, non-parametric estimation, kernel estimation, strong propagation of chaos, mean field limit. 

\textbf{AMS Classification 2020:} 60F15, 60G55, 62M05, 62M20

\section{Introduction}
In modern neuro-mathematics, many papers are devoted to the study of large scale limits of systems of interacting neurons belonging to regions of the brain where neurons behave similarly and can be described by interactions of mean field type (see \cite{chevallier}, \cite{duartechevallier}, \cite{tanre1}, \cite{tanre2}, \cite{DGLP}, \cite{SusanneEva}, \cite{fournier_toy_2016}, \cite{tilo2}, \cite{Touboul} and the references therein). In this situation, the large population limit, also called mean field limit, provides a mesoscopic description of any single neuron by a theoretical limit dynamic which is possibly easier to study than the finite, but large system. 

In the present paper we ask the question what the mean field limit can tell us about the true finite system of neurons from a statistician's perspective. We concentrate on the question of determining how a given neuron responds to incoming stimuli, via its spiking rate function which describes the ability of the neuron to react to such stimuli, by sending sequences of action potentials. As it is commonly done, we choose to model the spiking activity of the system by a variant of a non-linear Hawkes processes, see \cite{Brillinger}, \cite{Cessac}, \cite{do}, \cite{ae}. We also refer to \cite{GLP} and the references therein  \bdc and to \cite{Perthame},\cite{CaRou} for a PDE's point of view. \ec  Our system has $N$ interacting components (the neurons) which evolve according to a deterministic flow in between successive spikes of the system and send action potentials at a rate depending on their membrane potential. Therefore, our model belongs also to the class of \bdc piecewise \ec  deterministic Markov processes, and the Markov property is an important tool; in particular, we will rely heavily on the flow structure of the limit dynamic. 

Our goal is to estimate the unknown spiking rate function, for a fixed potential value, based on an observation of the membrane potentials of the neurons, continuously in time, during some fixed time interval $ [0, T ],$ observing more and more neurons. Our asymptotic is therefore as $ N \to \infty, $ while the time interval we consider is fixed. 

We use a classical Nadaraya–Watson kernel type estimator which is roughly defined as follows. Supposing that we want to estimate the unknown rate for any neuron having a potential value $ x^*,$ \bdc we define\ec , for some given window size $h_N > 0, $ 
$$ \hat f_N ( x^*) = \frac{\# \mbox{ spikes of neurons having potential in } ]x^* - h_N, x^* + h_N [ }{\mbox{occupation time of } ]x^* - h_N, x^* + h_N [ \mbox{ during } [0, T ] \mbox{ \bdc by the whole system}},$$
\bdc where the denominator is the global occupation time of all neurons having potentials in $]x^*-h_N;x^*+h_N[,$ \ec
see \eqref{eq:kernelestimator} below for the precise definition of the estimator. 

Indeed, at each time $s$ where a neuron has potential value $  \sim x^*, $  the probability that this neuron spikes during the next time step $] s, s + \Delta s ]$ is given approximately by $ f ( x^* ) \Delta s ,$ if $ \Delta s $ is small and if $f$ is the unknown spiking rate function. So if we choose a window size $h_N $ converging to $0 $ as $ N \to \infty, $ the above expression should converge, in the large population limit, to $ f ( x^* ).$  This is precisely the reason why the kernel estimator works well in general stochastic intensity models (see \cite{ApKu}, \cite{DoKuk} and  \cite{Ku} for the intensity estimation in Poisson process models), and has been widely studied within different asymptotic scenarios such as either the observation of $N$  i.i.d. copies of the same model, as $ N \to \infty $  (see for example \cite{Vincentetthi}, Chapter 6.2 of the book \cite{Kubook} and the references therein), or the observation of a finite number of interacting processes during longer and longer time intervals, based on the ergodicity of the process, that is, as $ T \to \infty $ (see e.g. \cite{ADGP}, \cite{hoffmann-olivier}, \cite{Krell}, to cite just a few). 

\bdc As our model is a mean field system, \ec  our setting is closer to the first scenario, and no ergodicity is needed in our approach. Our asymptotics are based on the large population limit of the system, observing more and more neurons during a fixed time interval.  However, for each fixed $N,$ the neurons are not independent, and independence is only valid in the $ N \to \infty $ limit. More precisely, it is well-known that the {\it propagation of chaos} property holds for such systems. This means that once we work in the hypothetical limit model, all neurons are independent. Moreover, each neuron's potential process $ (X^{N, i }_t)_t$ can be approximated by a limit process $ (x_t)_t$ which is deterministic, and we have a precise control on the rate of convergence of the strong error $ \E \left( \sup_{ s\le T }  | X_s^{N, i } - x_s| \right) , $ for any $ T > 0 $ (see Theorem \ref{theo:strongapprox} below).  

Let us now describe the major difficulty of using kernel estimators in the framework of mean field limits. To explain this difficulty,  we have to be more precise about the definition of our estimator. Instead of using indicator functions of small balls around $x^*$ (which are not continuous),  
our estimator uses a renormalized kernel function $Q_h := \frac1h Q( \cdot/h) ,$ where $ Q $ is a fixed Lipschitz continuous kernel that integrates to $1 .$ $h$ is the window size, and we choose $h= h_N \to 0$ as $ N \to \infty. $ The estimator is then defined using $ Q_{h_N} ( x - x^* ) $ instead of the indicator of $]x^* - h_N, x^* + h_N [.$ 

In what follows, to simplify the exposition, we suppose that we want to estimate in the position $x^* = 0.$ An important step in our proofs consists then in replacing an additive functional of the type $ \int_0^t Q_{h_N} ( X^{N, i}_s) ds $ (the occupation time of a ball of radius $h_N$) by its associated limit quantity $ \int_0^t Q_{h_N} ( x_s) ds .$ A first, naive, approach would be to evaluate the error by 
\begin{multline*}
\E | \int_0^t Q_{h_N} ( X^{N, i}_s) ds - \int_0^t Q_{h_N} ( x_s) ds| \le \|Q_{h_N}\|_{Lip} \int_0^t  \E   | X_s^{N, i } - x_s|  ds\\
= h_N^{ - 2 } \|Q\|_{Lip}  \int_0^t \E   | X_s^{N, i } - x_s|  ds ,
\end{multline*}
where 
$$ \| Q\|_{Lip} = \sup_{ x \neq y , x, y \in \R } \frac{ | Q ( x)- Q(y) |}{ |x-y|} $$
denotes the Lipschitz norm of the kernel $Q.$ 

It turns out that this upper bound is too pessimistic, since the blow up of the term $ h_N^{ - 2} $ is too strong as compared to the decay of the integrated distances $  \int_0^t \E   | X_s^{N, i } - x_s|  ds.$ 
This observation is common to all approaches relying on kernel estimators in a mean field setting. Let us cite in particular the recent article \cite{Hoffmann} on nonparametric estimation in mean field interacting diffusion models. Therein, the authors rely on Bernstein deviation inequalities, after having applied a change of measure by means of Girsanov's theorem, to cope with the above problem. This approach seems not feasible in the present setting, since the likelihood ratio process blows up as $ N \to \infty ,$ depending on more and more jump terms.  

Instead, we use a strong approximation and rely on the fact that we have the representation 
\begin{equation}\label{eq:strongapprox0} 
X_t^{N, i }= x_t + \frac{1}{\sqrt{N}} V_t^{N, i } 
\end{equation}
that allows to represent each neuron's potential in terms of the limit potential $x_t$ and an explicit error term. Moreover, we know that for each $ p \geq 1$ and for any $T>0,$  
$$ 
   \E \sup_{ t \le T } | V_t^{N, i } |^p \le C_T (p) ,
$$
\bdc{where the constant $C_T (p) $ does only depend on $T$ and on $p,$ but not on $N.$} \ec

The limit process $ x_t$ solves the \bdc{ordinary differential equation} \ec (ODE) $ d x_t = F ( x_t) dt ,$ with $ F ( x) = b (x) + w f(x), $ where $b$ describes the leakage of the neuron's potential values in between successive spikes and where $ w$ is the limit synaptic weight. 
As a consequence, we are led to investigate expressions of the type
$$  \int_0^t Q_{h_N} ( x_s + \frac{V^{N, i }_s}{\sqrt{N} }  ) ds .$$
\bdc{We deal with such expressions by applying a change of variables. This change of variables uses the limit flow.} \ec  More precisely, we put  $ u:= x_s/h_N  , du = \frac1{h_N}  F ( x_s) ds = \frac{1}{h_N} F( h_N u) ds,$ such that 
$$   \int_0^t Q_{h_N} ( x_s + \frac{V^{N, i }_s}{\sqrt{N} }  ) ds = \int_{ x_0/h_N}^{x_t/h_N } Q \left( u + \frac{ V^{N, i }_{ \gamma(h_N u)}}{ h_N \sqrt{N} }\right) \frac{1}{F(h_N u)} du .$$
Here, $ \gamma(u) $ is the inverse flow of $ s \mapsto x_s .$ This expression has then to be compared to its corresponding limit quantity (as $ N \to \infty $) which is 
$$ \int_{x_0/h_N}^{x_t/h_N } Q(u)  \frac{1}{F(h_N u)} du , $$
relying only on the Lipschitz norm of the non-renormalized, original kernel $Q$ which does not depend any more on $h_N.$   

Once these techniques settled, the remainder of the proofs is straightforward and follows standard arguments of nonparametric estimation based on kernel estimators. In particular, we obtain the classical rate of convergence $ N^{ - \frac{\beta}{ 1 + 2 \beta } } , $ where $ \beta $ is the (known) regularity of the spiking rate function $f,$ for the minimax error where we assess the quality of estimation by an $L^2 -$loss, uniformly within given H\"older classes that describe the regularity of the unknown spiking rate function. We also prove a central limit theorem for the rescaled estimation error, with classical speed $ \sqrt{N h_N}, $ for slightly smaller choices of $h_N = o ( N^{ - \frac{1}{1 + 2 \beta }} ).$ These results are gathered in our main Theorem, Theorem \ref{theo:main}.

Let us comment on a second aspect about kernel type estimators. Often their asymptotic behavior is controlled by means of a local time (for example in diffusion or branching diffusion models, see e.g. \cite{hhl}) or by means of a many-to-one formula (in some interacting particle systems, as for example in \cite{Krell}). In the present setting, 
we are able to deal with the kernel estimator without having a local time or a many-to-one formula. What makes our method work is that we are able to use a change of variables \bdc{that uses }\ec the limit deterministic flow. To the best of our knowledge such an approach has never been used up to now. Let us stress at this point that our method \bdc{only works if the limit ODE $ (x_s)_{ 0 \le s \le t} $ does not start from an equilibrium point; that is,} \ec we are able to estimate the spiking rate only in points $x^* $ such that $ F (x^* ) \neq 0.$   An important ingredient is the strong approximation result \eqref{eq:strongapprox0} that allows to represent the fluctuations of the finite system around the limit dynamical system by means of an explicit error term which can be easily controlled.

We close this introduction with a short overview of recent advances in the statistical study of stochastic systems of interacting particles in a mean field frame and their associated non-linear limit processes. While such limits have been extensively studied from a probabilistic point of view
during the last three or four decades, only few statistical results are known, and mostly for mean field interacting diffusion models. We have already cited the work of Della Maestra and Hoffmann \cite{Hoffmann} above, proposing an asymptotic estimation theory in the large population setting when $N$ interacting diffusive particles evolve in a mean field frame and are observed continuously in time over a fixed time interval $[0, T ]. $ In \cite{amorino2023parameter}, the authors Amorino, Heidari, Pilipauskaite and Podolskij deal with the parameter estimation of both drift and diffusion coefficients, for discretely observed interacting diffusions, in the asymptotic framework where the step size tends to 0 and the number of particles to infinity. In \cite{semi_param_McKV}, Belomestny, Pilipauskaite and Podolskij have conducted a related study, yet in a semi parametric setting. Finally, Genon-Catalot and Laredo deal with the estimation problem in the limit model of a non-linear in the sense of McKean-Vlasov diffusion model, in the small noise regime \cite{gcatalot_laredo2}. 

The statistical theory of systems of interacting point processes in a mean-field frame is less developed, but we can mention  \cite{mf_inference_Bacry} where Bacry, Ga\"{i}ffas, Mastromatteo,  and Muzy tackle the problem of estimating the underlying parameters in systems of mean field interacting Hawkes processes, working thus in a parametric setting. Finally, let us mention \cite{mf_inference_Delattre} where Delattre and Fournier study a very particular framework: $N$ point processes are observed during a fixed time interval, interacting on an Erd\"os-Renyi random graph, and the goal is to recover the underlying parameters of the linear Hawkes process as well as the connection probability.


{\bf Organisation of the paper.}
We introduce our model and the observation scheme in Section \ref{sec:2}. Section \ref{sec:2} also contains our main result, Theorem \ref{theo:main}, which states a control of the rescaled estimation error, evaluated with respect to an $L^2 -$loss, and a weak-convergence result, with convergence rate $  N^{ - \frac{2\beta}{ 1 + 2 \beta } }.$ Here $ \beta $ is the (known) regularity of the unknown rate function. Section \ref{sec:3} states and formalizes the strong approximation result \eqref{eq:strongapprox0} in Theorem \ref{theo:strongapprox}; the proof of this Theorem is postponed to the Appendix section.  All remaining  proofs are gathered in Section \ref{sec:proofs}. Finally, we present some simulations in Section \ref{sec:simu}, where we also discuss the influence of the longtime behavior of the limit system on the estimation quality. 

\bdc{
{\bf General notation.}
Throughout this paper we shall use the following notation. For a function $ g : \R \to \R, $ we write $\| g \|_\infty = \sup_{ x \in \R } | g(x) | .$ If $g$ is Lipschitz continuous, we also use its Lipschitz norm defined by $\|g\|_{Lip} = \sup_{ x \neq y , x, y \in \R} 
\frac{ | g(x) - g(y) |}{|x-y|} .$ We use the notation $ \stackrel{\mathcal L}{\to } $ to denote convergence in distribution. 

}\ec

\section{The model, assumptions and main results}\label{sec:2} 
\subsection{The model}
We consider a mean field system of $N$ interacting spiking neurons described by a generalized Hawkes process. Each neuron $i$ is characterized by its membrane potential which is a real valued stochastic process  $( X_t^{N, i }, t\geq 0 ).$
The system starts from the initial potential values $ X^{N, i }_0 , 1 \le i \le N, $ which are i.i.d. real valued random variables, distributed according to some probability measure $ \nu_0 $ having a finite first moment. The evolution of the total system is described by the following system of stochastic differential equations.
\bdc
\begin{equation}\label{eq:finitesystem}
  X_t^{N, i } =  X_0^{N, i }+\int_0^t b ( X_s^{N, i } ) ds + \frac{1}{ N } \sum_{ j = 1, j \neq i  }^N \int_{[0,t]}\int_{\R_+} \int_{\R} u 1_{ z \le f ( X_{s-}^{N, j } ) }  \pi^j (ds, dz, du ) ,\;  1 \le i \le N. 
 \end{equation}
\ec
Here, the $ \pi^j , j \geq 1, $ are i.i.d. Poisson random measures on $ \R_+ \times \R_+ \times \R,$ having intensity $ dt dz \nu (du ) ,$ and $ \nu $ is a probability measure on $ \R.$ 
\bdc The jumps of the system are governed by these Poisson random measures: if $\pi^i$ has a jump at  time $s$ and the jump is accepted (with probability proportional to $f ( X_{s-}^{N, i } ),$ conditionally on the trajectory of the system on $[0,s[$), then each   neuron $j\neq i$ receives an additional random value $U/N$ which is added to its membrane potential $X_{s-}^{N, j } .$ Here $U\sim \nu$ is sampled independently of the trajectory of the system on $[0,s[$. The value of $X_{s-}^{N, i } $ is not changed.  Hence each neuron $j$ receives the jumps generated by the other neurons. Between two successive jump times $S<T,$   the dynamics of $X_{s-}^{N, i }$  follows the deterministic flow 
$$X_{S+t}^{N, i }=X_{S}^{N, i }+\int_S^{S+t} b ( X_s^{N, i } ) ds,\;\; 0\leq t< T-S,$$
for any $ 1 \le i \le N.$ \ec

\begin{ass}\label{ass:1}
\begin{enumerate}
\item 
The function $b: \R \to \R $ is $ C^\infty$ and Lipschitz.   
\item The function $ f : \R \to \R_+ $ is Lipschitz continuous. 
\item
The measure $\nu$ satisfies: $ \int_\R |x|^p \nu (dx ) < \infty  $ for all $ p \geq 1 $ and  $ \nu ( \{ 0 \} ) = 0.$ We put    
$$ w := \int_\R u \nu ( du) .$$

\end{enumerate}
\end{ass}

\bdc{Let $\Delta X_t^{N , i } := X_t^{N , i } - X_{t-}^{N , i } $ be the jump height of the membrane potential of neuron $i$ at time $t$ \ec and write $ T_1 = \inf \{ t \geq 0 : \Delta X_t^{N , i } \neq 0 \mbox{ for some } i \}, $ and 
$ T_{n+1} = \inf \{ t > T_n :  \Delta X_t^{N , i } \neq 0 \mbox{ for some } i \} ,$ $ n \geq 1,$ for the successive jump times of the system.  Here, by convention, $ \inf \emptyset = + \infty.$ 
It is well known that under the above assumption, for each fixed $N,$ equation \eqref{eq:finitesystem} possesses a unique strong solution (see Theorem~IV.9.1 of \cite{iw}) such that $ \lim_{ n \to \infty } T_n = \infty $ almost surely.

The system \eqref{eq:finitesystem} exhibits the propagation of chaos property (Theorem 8 of \cite{dfh}), and under Assumption \ref{ass:1},  in the \bdc{limit $N \to \infty ,$ }\ec  each neuron's potential is described by an ordinary differential equation given by 
\begin{equation}\label{eq:limitequation}
 d x_t = F ( x_t) dt,\ F( x) = b(x) + w f( x) ,\ t \geq 0.
 \end{equation}
In the sequel, the dynamical system \eqref {eq:limitequation} will play a key role in our analysis. Since $F$ is Lipschitz, for any initial value $x_0,$ it possesses a unique non-exploding solution starting from $x_0$ at time $0.$ 
\bdc{In what follows, we will impose the following additional assumption.
\begin{ass}\label{ass:1bis}
$\nu_0 = \delta_0 ;$ that is, $X^{N, i }_0 = x_0 $ for all $ 1 \le i \le N.$
\end{ass}}\ec
\bdc{$x_0$ is thus the common initial value at time $t = 0 $ of all membrane potential processes $ X^{N, i}_t $ and of the limit dynamical system $x_t.$ }\ec


\bdc{Two examples of the evolution of the system \eqref{eq:finitesystem} with $N=5$ and $N=10$ are displayed in Figure \ref{fig:membrane_potential_train}. In these simulations, we consider the system with jump rate function 
$$
f(x)=2-e^{-x^2}
$$
and drift function 
$$
b(x)=-x.
$$
We consider $\nu \sim Uniform(-2,3)$ such that $w=0.5$. The initial membrane potential of the system is set to $x_0=-1$. In Section \ref{sec:simu} we discuss how this system can be simulated and present more simulations. Figure \ref{fig:membrane_potential_train} illustrates how the spikes of the neurons in the system modify the membrane potentials, which are initially the same for all neurons. } \ec

\begin{figure}[!htb] 
\centering
\begin{subfigure}[b]{0.49\textwidth}
    \centering
    \includegraphics[width=\textwidth]{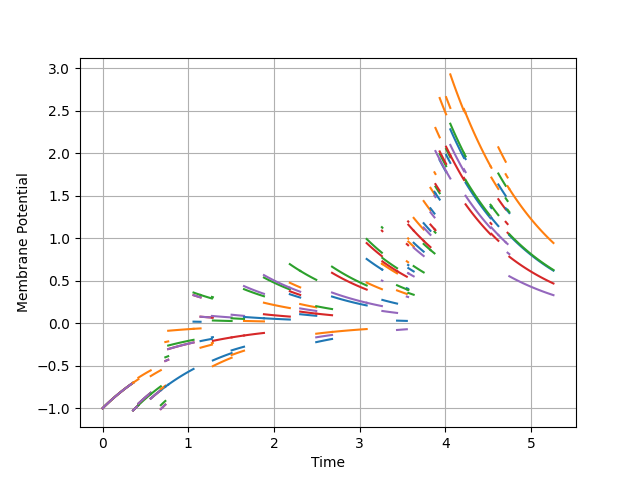}
    \includegraphics[width=\textwidth]{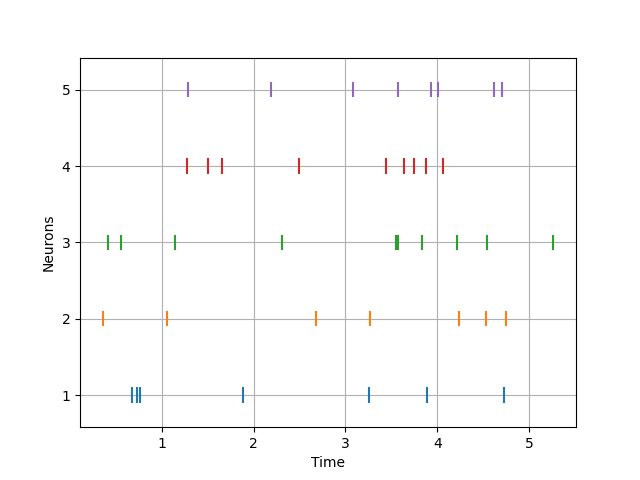}
    \caption{Evolution of the system with $N=5$.}
\end{subfigure}
\begin{subfigure}[b]{0.49\textwidth}
    \centering
    \includegraphics[width=\textwidth]{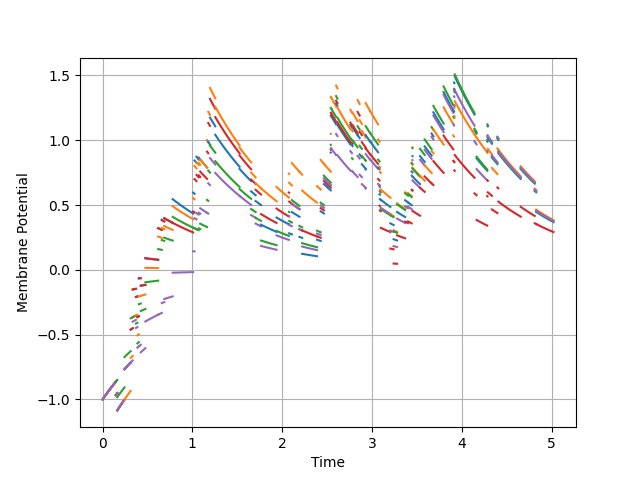}
    \includegraphics[width=\textwidth]{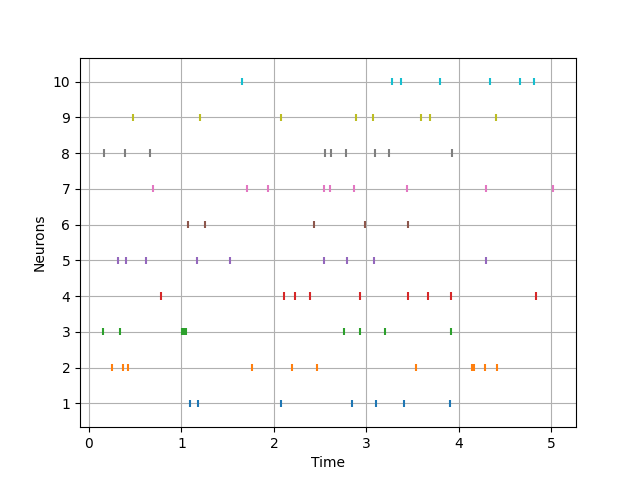}
    \caption{Evolution of the system with $N=10$.}
\end{subfigure}
\caption{\bdc{Time evolution of the membrane potential of the systems with $N=5$ (left panel) and $N=10$ (right panel) and the associated spike trains.}\ec}
\label{fig:membrane_potential_train}
\end{figure}

\bdc{The propagation of chaos property exhibited by the system \eqref{eq:finitesystem} as $N \to \infty$ is illustrated by Figures \ref{fig:systemconvergence1} and \ref{fig:systemconvergence2}. We consider the same system as the one considered in the simulation displayed in Figure \ref{fig:membrane_potential_train}. The only difference is that we now consider systems with $N=20$, $N=200$, $N=1000$ and $N=5000$ neurons. In Figures \ref{fig:systemconvergence1} and \ref{fig:systemconvergence2}, the time evolution of the membrane potential of $10$ neurons are displayed until time $2$ and $10$, respectively. As $N$ increases, the membrane potential of each neuron approximates a limit function, which is the solution of an ordinary differential equation describing the limit behavior of each neuron.} \ec

\begin{figure}[!htb] 
\centering
\begin{subfigure}[b]{0.49\textwidth}
    \centering
    \includegraphics[width=\textwidth]{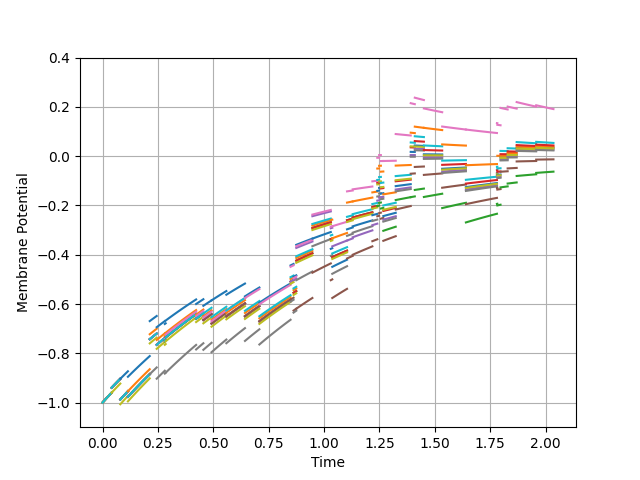}
    \caption{Evolution of $10$ neurons with $N=20$.}
\end{subfigure}
\begin{subfigure}[b]{0.49\textwidth}
    \centering
    \includegraphics[width=\textwidth]{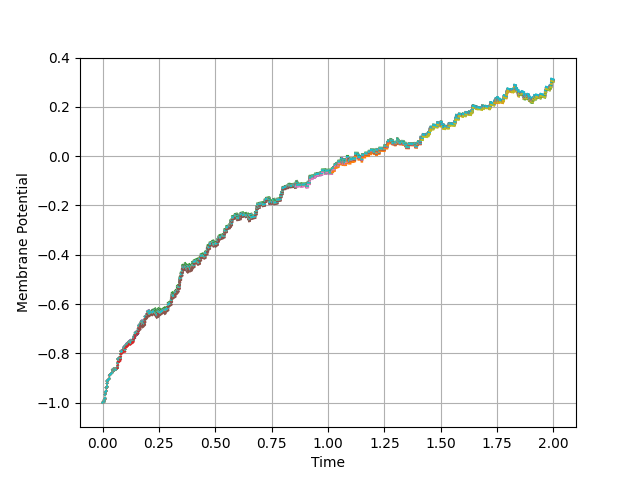}
    \caption{Evolution of $10$ neurons with $N=200$.}
\end{subfigure}
\begin{subfigure}[b]{0.49\textwidth}
    \centering
    \includegraphics[width=\textwidth]{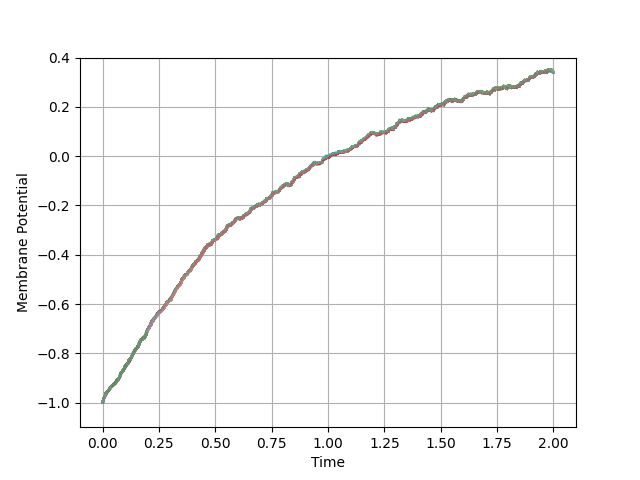}
    \caption{Evolution of $10$ neurons with $N=1000$.}
\end{subfigure}
\begin{subfigure}[b]{0.49\textwidth}
    \centering
    \includegraphics[width=\textwidth]{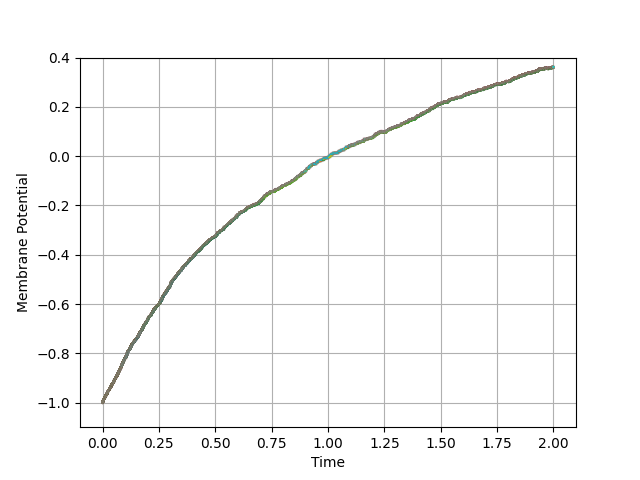}
    \caption{Evolution of $10$ neurons with $N=5000$.}
\end{subfigure}
\caption{\bdc{Time evolution of the membrane potential of $10$ neurons within a system of $N$ neurons until time $2$.}\ec}
\label{fig:systemconvergence1}
\end{figure}

\begin{figure}[!htb] 
\centering
\begin{subfigure}[b]{0.49\textwidth}
    \centering
    \includegraphics[width=\textwidth]{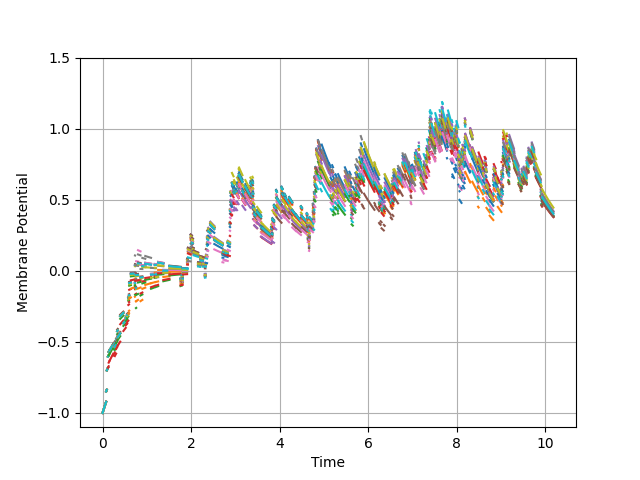}
    \caption{Evolution of $10$ neurons with $N=20$.}
\end{subfigure}
\begin{subfigure}[b]{0.49\textwidth}
    \centering
    \includegraphics[width=\textwidth]{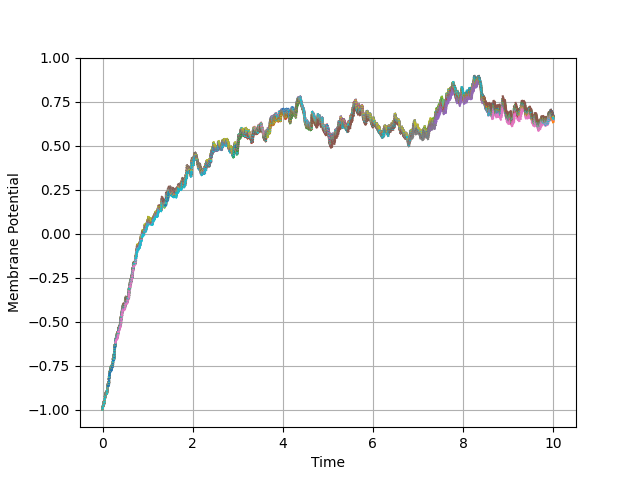}
    \caption{Evolution of $10$ neurons with $N=200$.}
\end{subfigure}
\begin{subfigure}[b]{0.49\textwidth}
    \centering
    \includegraphics[width=\textwidth]{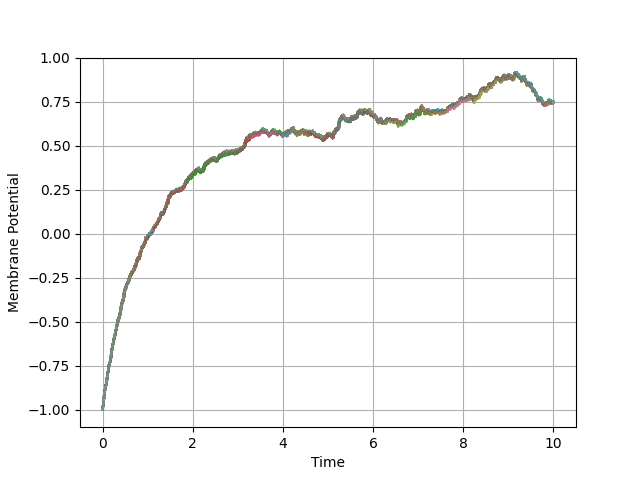}
    \caption{Evolution of $10$ neurons with $N=1000$.}
\end{subfigure}
\begin{subfigure}[b]{0.49\textwidth}
    \centering
    \includegraphics[width=\textwidth]{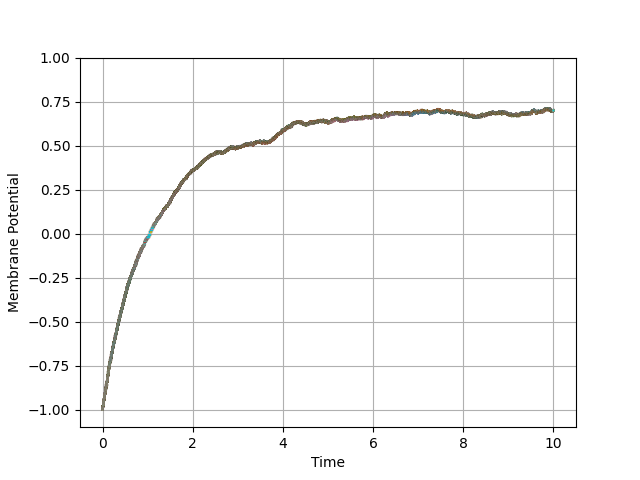}
    \caption{Evolution of $10$ neurons with $N=5000$.}
\end{subfigure}
\caption{\bdc{Time evolution of the membrane potential of $10$ neurons within a system of $N$ neurons  until time $10$.}\ec}
\label{fig:systemconvergence2}
\end{figure}

\subsection{Observation scheme, definition of the estimator and main result.}
We suppose that we observe the system \eqref{eq:finitesystem} continuously in time,  during some fixed time interval $ [0, T ].  $  \bdc{In particular, we suppose that we observe the initial value $x_0$ of the potentials.} \ec We wish to estimate the unknown spiking rate function $ f$ in some position $ x^* \in \R.$ \bdc{Our estimation procedure works without knowing the underlying drift function $b.$}\ec

Our estimator is based on the observed jumps 
of the system. More precisely, we will use the times of the jumps (spikes) and the heights of the potential of the spiking neuron. Therefore we introduce for each $n \geq 1 $ the index $I_n \in \{ 1, \ldots, N \}$ of the spiking neuron at time $T_n.$ On the event $ 
\{ I_n = i \} ,$ it is the $i$-th neuron that spikes at time $T_n, $ and all other neuron's $j$ have their potential value increased by $ u/ N, $ where $u$ is the \bdc realisation of the \ec  random synaptic weight chosen at time $T_n.$ The precise definition of $I_n$ is as follows.  
$$ I_n = \arg\min \{ |\Delta_n (i )| \} , \mbox{ where } \Delta_n (i)  = \Delta X_{T_n}^{N,i} = X_{T_n}^{N,i} - X_{T_n-}^{N,i} .$$ 
Let us explain the above formula. Indeed, according to equation \eqref{eq:finitesystem}, if it is neuron $i$ that emits the spike at time $T_n,$ then we have that $ \Delta X^{N, i }_{T_n} = 0 $ while for all other neurons $j \neq i, $ $ \Delta X^{N, j }_{T_n} = U_n/ N, $ with $ U_n \sim \nu, U_n \neq 0.$ 

We introduce the following random measure 
\begin{equation}\label{eq:muJ}
 \mu_J^N (dt, dx) := \sum_{ n \geq 1 } \delta_{ (T_n, X^{N, I_n}_{T_n- }) } (dt, dx) 
\end{equation} 
of the random times of the spikes and the potential value of the spiking neuron, just before the emission of the spike.   
The compensator (see \cite{JSh}, chapter II.1) of $ \mu_J^N $ is given by 
\begin{equation}\label{eq:nuJ}
 \nu_J^N (dt, dx) = \sum_{i= 1}^N  f ( X^{N, i }_{t}) dt \delta_{X_{t}^{N, i  }} (dx) .
\end{equation}  

To build our estimator, let us fix a kernel function $ Q \in C^\infty ( \R ,\R )$ which is of compact support such that $ \int_\R Q (y ) dy = 1.$ For $ h > 0 $ fixed we put 
$ Q_h (y) = \frac1h Q( y/h ) .$ Then the estimator we propose is (for a fixed $x^* \in \R $) 
\begin{equation}\label{eq:kernelestimator}
 \hat f^{N, h }_T ( x^*)\ec  = \frac{\int_{[0, T ] \times \R } Q_h ( x-x^*) \mu^N_J ( ds, dx)}{ \int_0^T \sum_{i=1}^N  Q_h ( X_s^{N, i} - x^*) ds } ,
\end{equation}
where we define $ \frac{0}{0} := 0.$ 

Plainly, $\hat f^{N, h }_T ( x^*) $ is the total number of spikes, during $ [0, T ], $  emitted by neurons having potential values belonging to a ball close to $ x^* $ (where the closeness is measured by means of the kernel $Q$), divided by the total occupation time of this ball close to $x^* $ by the system. 
Note that we may rewrite the above estimator in terms of the underlying Poisson random measures as 
\begin{equation}\label{eq:kernelestimator1}
\hat f^{N, h }_T ( x^*) = \frac{ \sum_{ i = 1  }^N \int_{[0,T]\times \R_+}   Q_h ( X_{s-}^{N, i }-x^*) 1_{\{ z \le f ( X_{s-}^{N, i } ) \}}  \bar \pi^{i} (ds, dz)}{ \sum_{i=1}^N \int_0^T  Q_h ( X_s^{N, i} - x^*) ds } ,
\end{equation}
where we write $ \bar \pi^i ( ds, dz ) := \pi^i ( ds, dz,  \R ).$

\bdc{
\begin{remark}
In case that $ b (x) = - \lambda ( x - m ), $ for some $ \lambda > 0 $ and some $ m \in \R$ (exponential leakage of the potential values in between successive spikes of the system), for large $N,$ the membrane potentials of all neurons are very close. This implies that to make our estimator work, the following observation scheme is sufficient: We observe the membrane potential of a single neuron, continuously over time, and, at the same time, all spiking times of the whole system. This setting is rather realistic since neurophysiologists are nowadays able to record simultaneously the spike trains of a huge system of neurons, together with the precise membrane potential values of a small set of neurons.
\end{remark}

}\ec
The quality of our estimator will depend on the asymptotic amount of time, as $ N \to \infty, $ that the system spends close to the position $ x^*$ where we wish to estimate. Recall that $ (x_s)_{ s \geq 0}  $ denotes the solution to the limit equation \eqref{eq:limitequation}. In particular, writing $ ] x_0, x_T[  $ for the interval $ \{ x \in \R : x = x_s \mbox{ for some } 0 < s < T \},$  we need to impose the following assumption. 
\begin{ass}\label{ass:2}
\bdc{$ F ( x_0 ) \neq 0$ and $x^* \in ] x_0, x_T [ . $}\ec   
\end{ass}

Several remarks are in order concerning this assumption. 
\begin{remark}
{\bdc If Assumption \ref{ass:2} is violated and $ x^* \notin ] x_0, x_T[,$ then for any choice of $ h = h_N$ such that $ h_N \to 0$ as $ N \to \infty, $ the numerator and the denominator of our estimator will converge to $0. $ This follows from the propagation of chaos property which implies that for large $N,$ the finite system will be close to the limit system. This means that in case $ x^* \notin ] x_0, x_T[,$ asymptotically, there will be no observations in a ball around $ x^*$ such that nothing can be estimated.}\ec

\end{remark}

\begin{remark}
If we suppose that $f$ is \bdc{upperbounded }\ec and lowerbounded by two fixed constants  $ l < r , $ we may instead study the known dynamical systems 
$$ d l_t = ( b ( l_t) + w l)   dt \mbox{ and } d r_t = ( b (r_t) + w r ) dt ,$$
both starting from $l_0 = r_0 = x_0 ,$  observing that $ l_s \le x_s \le r_s $ for all $s. $ 
\bdc{If $ F( x_0 ) > 0, $ we have in particular $[x_0,l_T]\subset [x_0,x_T]\subset [x_0, r_T] ,$   
and both intervals $[x_0, l_T]  $ and $[ x_0 , r_T ] $ are known. If we assume $x^*\in ]x_0,l_T[ ,$ 
the condition $x^*\in ]x_0,x_T[ $ will be satisfied. Moreover, since 
$$b(x_0)+w l \leq F(x_0), $$
assuming $b(x_0)+w l >0$ guarantees that $F(x_0) >  0,$ and hence  $F(x)\neq 0$ on $[x_0,x_T].$
The case $ F(x_0) < 0 , $ which is implied by $ b(x_0 ) + w r < 0, $ is treated analogously. } \ec
\end{remark}

To assess the limit behavior of the bias term appearing in the estimation error, we classically restrict attention to H\"older-classes of rate functions $f$ that we define now. Remember that we suppose $b$ to be known.  Fix any $ \beta \geq 1  $ and write $ \beta = k + \alpha , $  with \bdc{$ k=\lfloor \beta\rfloor $ }\ec and $ 0 < \alpha \le 1. $  Then for fixed constants   $ l, L > 0, $  let 
\begin{multline}\label{eq:hb}
 H( \beta, l,  L ) = \{ f \in C^k ( \R , \R_+) :  |w f(x^*) + b (x^*)|  \geq l  , \; \forall x \in \R,  f(x) \le   L , | f' ( x) | \le L ,  \\ 
\forall x \in ]x_0, x_T[  ,  \bdc{ \; \forall  2 \le n \le k , \; | f^{(\bdc n \ec)} (x) | \le L, \;  | f^{(\bdc k \ec)} (x) - f^{(\bdc k  \ec) } (y) | \le L  | x - y |^\alpha  } \ec \}.
\end{multline}
Since to ensure the existence and uniqueness of the system \eqref{eq:finitesystem} the rate functions $f$ need to be Lipschitz, we  impose that (at least) $ \beta \geq 1.$ 

Our control of the kernel estimator \eqref{eq:kernelestimator1} relies on a precise control of the occupation time appearing in the denominator of its definition. In what follows, we write, for any $ t \le T,$ 
\begin{equation}\label{eq:at}
 A_t^{N,h} := \frac1N \sum_{i=1}^N\int_0^t   Q_h ( X_s^{N, i} - x^*) ds \mbox{ and }   A_t^h :=  \int_0^t   Q_h ( x_s - x^*) ds.   
 \end{equation}
We will be able to give a control of the estimation error in restriction to the set 
$$
\Omega_T^{N,h,\eps}:=\left\{\left| 
\frac{A_T^{N,h}}{A_T^{h}}-1
\right|  \le \;  \eps \right\}.
$$ 
where $\eps < 1 $ is fixed. 

Consider the system \eqref{eq:finitesystem} starting from  $ \nu_0 = \delta_{ x_0} $ for some fixed $x_0 \in \R.$ We write $ \Pr^f_{x_0} $ for the probability measure under which the process $ X^N $ follows the equation \eqref{eq:finitesystem} with spiking rate $f,$ with initial distribution $ \nu_0 = \delta_{x_0},$ and $ \E_{x_0}^{f}$ denotes the expectation taken with respect to $ \Pr_{x_0}^f.$

\begin{theorem}\label{theo:main}
Grant Assumptions \ref{ass:1} and \ref{ass:2}. 
 Let $(h_N)_N$ be a decreasing sequence of positive numbers such that $h_N \to 0 $ and 
$ N h_N^2 \to \infty $ as $N\to\infty.$  Suppose that  $ \beta \geq  1   ,$ with $ \beta = k + \alpha , k \in \N $ and $ 0 < \alpha \le 1. $  Assume further that 
the kernel $Q$ satisfies the additional property
$$ \bdc {\int_\R u^j Q(u) du = 0 \mbox{ for all } 1 \le j \le k .}\ec $$ 

a) If $ h_N = {\mathcal O} (N^{ - \frac{1}{ 2 \beta + 1 }})   $ and $r_N= N^{\frac{2\beta}{2{\beta}+1}},$ then  the kernel estimator \eqref{eq:kernelestimator} satisfies 
 \begin{equation}\label{upperbound}
  \limsup\limits_{N{\rightarrow}{\infty}}r_N \sup\limits_{{f}{\in}H( \beta, l,  L )} 
   \E_{x_0}^{f} \left(  | \hat f^{N,h_N}_T(x^*){-}{f}(x^*)|^2   \cap   \Omega_T^{N,h_N,\eps}   
\right) <   \infty  ,
 \end{equation}
and we have that 
$\lim_{N\to\infty} \sup\limits_{{f}{\in}H( \beta, l,  L )}    \Pr_{x_0}^{f}\left((\Omega_T^{N,h_N,\eps})^c \right)  =0.$ 
   
b) If $f \in H ( \beta, l , L ) $ and if $ h_N = o ( N^{ - \frac{1}{ 2 \beta + 1 } }) ,$ then, as $ N \to \infty, $ we have convergence in distribution, under the law $ \Pr_{x_0}^f , $ 
$$ \sqrt{ N h_N}(\hat f^{N,h_N}_T ( x^*) - f(x^* ) ) \stackrel{\mathcal L}{\to} {\mathcal N} ( 0, \kappa^2), $$
where $ \kappa^2 = | F ( x^* )  f(x^*)| \int_{- \infty }^{\infty} Q^2 (u) du.$ 
\end{theorem}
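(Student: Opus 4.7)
The plan is to carry out the classical kernel bias--variance decomposition and to systematically replace $\|Q_h\|_{Lip} = h^{-2}\|Q\|_{Lip}$ by $\|Q\|_{Lip}$ via the change of variables on the limit flow described in the introduction. Compensating the jump measure in \eqref{eq:kernelestimator1} yields
\[
\hat f^{N,h}_T(x^*) - f(x^*) = \frac{M_T^{N,h} + B_T^{N,h}}{A_T^{N,h}},
\]
where $A_T^{N,h}$ is as in \eqref{eq:at}, $B_T^{N,h} := \frac{1}{N}\sum_{i=1}^N\int_0^T Q_h(X_s^{N,i}-x^*)(f(X_s^{N,i})-f(x^*))\,ds$, and $M_T^{N,h}$ is the locally square-integrable martingale obtained by centring the jump part, with predictable bracket $\langle M^{N,h}\rangle_T = \frac{1}{N^2}\sum_i\int_0^T Q_h^2(X_s^{N,i}-x^*)f(X_s^{N,i})\,ds$.

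For the denominator, the change of variables $u=(x_s-x^*)/h$, legitimate since $|F|$ is bounded below along the limit trajectory by the H\"older-class constraint, gives
\[
A_T^h = \int_{(x_0-x^*)/h}^{(x_T-x^*)/h} \frac{Q(u)}{F(x^*+hu)}\,du \;\xrightarrow[h\to 0]{}\; \frac{1}{|F(x^*)|}
\]
by dominated convergence, Assumption \ref{ass:2} ensuring that the integration bounds tend to $\pm\infty$ with the correct orientation. Inserting the strong approximation $X_s^{N,i}=x_s+V_s^{N,i}/\sqrt{N}$ of Theorem \ref{theo:strongapprox} into $A_T^{N,h}$ and applying the same change of variables to each particle produces
\[
A_T^{N,h} = \frac{1}{N}\sum_i\int Q\!\left(u+\frac{V_{\gamma(x^*+hu)}^{N,i}}{h\sqrt{N}}\right)\frac{du}{F(x^*+hu)},
\]
so that $A_T^{N,h}-A_T^h$ is controlled only by $\|Q\|_{Lip}\cdot|V^{N,i}|_\infty/(h\sqrt{N})$. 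The moment bound $\E\sup_{s\le T}|V_s^{N,i}|^p\le C_T(p)$ from \eqref{eq:strongapprox0} then yields $\E|A_T^{N,h}-A_T^h|^2 = O(1/(Nh^2))$, so Chebyshev's inequality gives $\Pr((\Omega_T^{N,h,\eps})^c)=O(1/(Nh^2))\to 0$ uniformly over $H(\beta,l,L)$, and on $\Omega_T^{N,h,\eps}$ the denominator stays bounded below by a positive constant for large $N$.

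For the bias I split $B_T^{N,h}$ into the deterministic principal part $\tilde B_T^h := \int_0^T Q_h(x_s-x^*)(f(x_s)-f(x^*))\,ds$ and remainders which, treated with the same change-of-variables trick and the bound $\|f'\|_\infty\le L$, contribute $O(1/\sqrt{N})$. The principal part rewrites as
\[
\tilde B_T^h = \int \frac{Q(u)(f(x^*+hu)-f(x^*))}{F(x^*+hu)}\,du,
\]
and a Taylor expansion of $u\mapsto (f(x^*+hu)-f(x^*))/F(x^*+hu)$ at $0$ to order $k=\lceil\beta\rceil-1$, combined with $\int u^j Q(u)\,du=0$ for $1\le j\le k$ and the $\alpha$-H\"older regularity of $f^{(k)}$, gives $|\tilde B_T^h|\le C\,h^\beta$ uniformly over $H(\beta,l,L)$. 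For the martingale variance the same programme with $Q_h^2$ in place of $Q_h$ (producing one extra factor $1/h$) yields $\E\langle M^{N,h}\rangle_T = f(x^*)\int Q^2(u)\,du/(|F(x^*)|Nh) + o(1/(Nh))$. Combining squared bias $O(h^{2\beta})$ and variance $O(1/(Nh))$ on $\Omega_T^{N,h,\eps}$ and balancing at $h_N\sim N^{-1/(2\beta+1)}$ yields part (a).

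For part (b), $h_N=o(N^{-1/(2\beta+1)})$ makes $\sqrt{Nh_N}\,\tilde B_T^{h_N}=o(1)$, so only the martingale term contributes asymptotically. I then apply a triangular-array CLT for locally square-integrable martingales with jumps (e.g.\ Theorem VIII.3.11 of Jacod--Shiryaev \cite{JSh}): the rescaled predictable bracket $Nh_N\langle M^{N,h_N}\rangle_T$ converges in probability to $f(x^*)\int Q^2(u)\,du/|F(x^*)|$ by the previous paragraph's analysis, while the Lindeberg condition is immediate since a single jump of $\sqrt{Nh_N}\,M^{N,h_N}$ has size $O(\|Q\|_\infty/\sqrt{Nh_N})\to 0$. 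Slutsky combined with $A_T^{N,h_N}\to 1/|F(x^*)|$ in probability then gives the asserted Gaussian limit with $\kappa^2=|F(x^*)|f(x^*)\int Q^2(u)\,du$. The main obstacle throughout is the remainder analysis: every comparison of the $N$-particle system with the limit flow must be funnelled through the flow change of variables so that only $\|Q\|_{Lip}$ appears and never $\|Q_{h_N}\|_{Lip}$, which is also what forces the condition $Nh_N^2\to\infty$ into the hypotheses.
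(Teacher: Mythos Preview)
Your strategy is exactly the one the paper follows: the same bias--variance decomposition, the same change of variables $u=(x_s-x^*)/h$ on the limit flow combined with the strong approximation of Theorem~\ref{theo:strongapprox}, and the same martingale CLT (Theorem~VIII.3.11 of \cite{JSh}) for part~(b).

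One technical step you gloss over but the paper carries out explicitly: after the change of variables the integrand $Q(u+V^{N,i}_{\gamma(x^*+hu)}/(h\sqrt N))-Q(u)$ is supported on the \emph{random} interval $\{|u|\le K+|V^{N,i}_\cdot|/(h\sqrt N)\}$, not on $[-K,K]$, so the Lipschitz bound alone does not give ``controlled only by $\|Q\|_{Lip}|V^{N,i}|_\infty/(h\sqrt N)$'' --- there is an extra factor equal to the length of that support, producing the quadratic term $|V^{N,i}|_\infty^2/(h^2N)$ (and, in the bias analysis, even a cubic term). To handle this and simultaneously obtain a lower bound on $F(x^*+hu)$ that is uniform over $H(\beta,l,L)$ (the class only forces $|F(x^*)|\ge l$, not $\inf_{[x_0,x_T]}|F|\ge l$), the paper first truncates on the event $\{|V_T^{*,i}|/(h_N\sqrt N)\le K/\sqrt{h_N}\}$; on its complement the crude bound $\|Q\|_\infty/h_N$ together with Markov's inequality and the moment control \eqref{eq:wasserstein0} suffices. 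With this localization in place your claimed orders $\E|A_T^{N,h}-A_T^h|^2=O(1/(Nh^2))$ and $\sqrt{Nh_N}\,\E|B_T^{N,h_N}-B_T^{h_N}|\to 0$ go through, and the rest of your outline matches the paper.
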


\bdc{
\begin{remark}
Let us comment on the choice of the kernel function $Q.$ Imposing that $ \int_\R  Q(u ) du = 1 $ and that $ \int_\R u^j Q(u ) du = 0 $ for all $ 1 \le j \le k, $ means that $ Q$ is a kernel of order $k$ in the sense of Definition 1.3 of Chapter 1 in \cite{Tsybakov}. This condition imposes that $Q$ cannot be a probability density. Choosing a good kernel function is important to guarantee a good rate of convergence, it allows in particular a good control of the bias term. We refer to Chapter 1.2.2 of \cite{Tsybakov} for the construction of such kernels. 
\end{remark}

}\ec

\begin{remark}
We obtain the same $L^2$- rate of convergence $ O (N^{ - \frac{2\beta}{ 1 + 2 \beta } })$  as for the density estimation in the classical i.i.d. setting. In particular, our rate of convergence should be optimal. A rigorous proof would rely on the \bdc Local Asymptotic Normality  (LAN) \ec  property for a parametric version of our statistical model (we refer to the proof of Theorem 2.2 in \cite{hhl}). The roadmap to obtain the LAN property is classical and follows ideas that can already be found in \cite{evathese}, we do not further investigate this question here.
\end{remark}

\begin{remark}[{\bf Some remarks on partial observation schemes}]
Suppose that we are only observing a subsystem of the whole system of neurons. More precisely, let us assume that we have access to the spiking times of the first $ \gamma_N  $ neurons, where $ \gamma_N \in \N$ such that $ \gamma_N  \to \infty $ as $ N \to \infty, $ $ \gamma_N \le N.$ Then we build our estimator based on the observation of the spiking times of this subsystem. In terms of the underlying Poisson random measures, the estimator then reads as 
\begin{equation}\label{eq:kernelestimator1small}
\hat f^{N, \gamma_N, h }_T ( x^*) = \frac{ \sum_{ i = 1  }^{\gamma_N} \int_{[0,T]\times \R_+}   Q_h ( X_{s-}^{N, i }-x^*) 1_{\{ z \le f ( X_{s-}^{N, i } ) \}}  \bar \pi^{i} (ds, dz)}{ \int_0^T \sum_{i=1}^{\gamma_N}  Q_h ( X_s^{N, i} - x^*) ds } .
\end{equation}
The results of Theorem \ref{theo:main} remain true with the following modifications: Item a) holds with $ h_N = {\mathcal O} (\gamma_N^{ - \frac{1}{ 2 \beta + 1 }})$ and $r_N=\gamma_N^{\frac{2\beta}{2{\beta}+1}}.$ Item b) holds with $h_N = o ( \gamma_N^{ - \frac{1}{ 2 \beta + 1 } }) $ and with $ \sqrt{ \gamma_N h_N} $ instead of $ \sqrt{ N h_N} .$  
\end{remark}

The remainder of this article is devoted to the proof of Theorem \ref{theo:main} which follows from three preliminary results formulated in the next subsection.

\subsection{Study of the estimation error} 
We start working on the occupation time process $A_t^{N,h}$ given by \eqref {eq:at}. 
\begin{proposition}\label{prop:ANh}
If $(h_N)_N$ is such that $h_N\to 0,$  $Nh_N^2\to\infty,$ as $N\to\infty,$ then the following assertions are true. \\
(i) 
$$ \lim_{N \to \infty }  \sup_{ f \in H( \beta, l, L )} \E_{x_0}^f | A_T^{N,h_N}-A_T^{h_N} |  = 0 .$$
(ii) $A_T^{h_N}\to(|F(x^*)|)^{-1}, $ \bdc as $N\to\infty,$ \ec  and for all $ \varepsilon > 0, $ 
 $$ \bdc \lim_{N\to\infty}\ec \sup_{ f \in H ( \beta, l, L ) } \Pr_{x_0}^f (  | A_T^{N,h_N}- (|F(x^*)|)^{-1}| \geq \varepsilon ) = 0 .$$\\
(iii)
 $\lim_{N\to\infty} \sup\limits_{{f}{\in}H( \beta, l,  L )}    \Pr_{x_0}^{f}\left((\Omega_T^{N,h_N,\eps})^c \right)  =0.$
 \end{proposition}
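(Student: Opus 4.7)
The three items are linked: the real work lies in (i) and (ii), and (iii) will follow as a soft consequence. The difficulty to confront in (i) is the one anticipated in the introduction: a naive Lipschitz bound gives $\|Q_{h_N}\|_{Lip}=h_N^{-2}\|Q\|_{Lip}$, a blow-up which cannot be absorbed by the strong approximation rate $N^{-1/2}$ under the sole hypothesis $Nh_N^2\to\infty$. My plan is to transport both occupation integrals by the change of variables induced by the limit flow, after which only the Lipschitz norm of the unrescaled kernel $Q$ will appear.

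\emph{For (ii),} I will apply successively $y=x_s$ (legitimate since $|F(x^*)|\geq l$ on $H(\beta,l,L)$ and by Assumption \ref{ass:2} the one-dimensional autonomous flow is strictly monotone in a neighborhood of $x^*$) and $u=(y-x^*)/h_N$, obtaining
\[
A_T^{h_N}=\int_{(x_0-x^*)/h_N}^{(x_T-x^*)/h_N}\frac{Q(u)}{F(h_Nu+x^*)}\,du.
\]
For $N$ large the compact support of $Q$ lies strictly inside the bounds of integration (by Assumption \ref{ass:2}), and uniformly over $H(\beta,l,L)$ one has $|F(h_Nu+x^*)|\geq l/2$ on that support (using the uniform Lipschitz control on $f$ induced by $f\in H$). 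A dominated convergence argument then gives $A_T^{h_N}\to F(x^*)^{-1}$ uniformly in $f$.

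\emph{For (i),} I will start from exchangeability, reducing to
\[
\E_{x_0}^f\Bigl|\int_0^T\bigl[Q_{h_N}(X_s^{N,1}-x^*)-Q_{h_N}(x_s-x^*)\bigr]\,ds\Bigr|,
\]
insert the strong representation $X_s^{N,1}=x_s+V_s^{N,1}/\sqrt N$ from Theorem \ref{theo:strongapprox}, and then apply the same flow change of variables on each of the two terms. Writing $\gamma$ for the inverse of $s\mapsto x_s$, the expression reads
\[
\E_{x_0}^f\Bigl|\int\bigl[Q\bigl(u+V_{\gamma(h_Nu+x^*)}^{N,1}/(h_N\sqrt N)\bigr)-Q(u)\bigr]\frac{du}{F(h_Nu+x^*)}\Bigr|.
\]
This is the decisive gain: only the Lipschitz norm of $Q$, not of $Q_{h_N}$, governs the estimate. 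Combining $|Q(u+\delta)-Q(u)|\leq \|Q\|_{Lip}|\delta|$, the lower bound $|F|\geq l/2$ near $x^*$, and the moment control $\E_{x_0}^f\sup_{s\leq T}|V_s^{N,1}|^p\leq C_T(p)$ from Theorem \ref{theo:strongapprox} (uniform in $f\in H$), I will produce an upper bound of order $(h_N\sqrt N)^{-1}$, which vanishes by assumption. The main obstacle I anticipate is that the random shift $V/(h_N\sqrt N)$ can push the support of $Q$ outside the (finite) integration range; I will handle this by truncating on the event $\{\sup_s|V_s^{N,1}|\leq R\,h_N\sqrt N\}$ with $R$ large and controlling the complement via a high enough moment of $V$.

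\emph{For (iii),} from (ii) one has $A_T^{h_N}\to F(x^*)^{-1}\neq 0$ uniformly, so for $N$ large $|A_T^{h_N}|$ is uniformly bounded below by a positive constant. A Markov inequality combined with (i) then yields
\[
\Pr_{x_0}^f\bigl(|A_T^{N,h_N}/A_T^{h_N}-1|>\eps\bigr)\leq \frac{\E_{x_0}^f|A_T^{N,h_N}-A_T^{h_N}|}{\eps\,|A_T^{h_N}|}\longrightarrow 0,
\]
uniformly in $f\in H(\beta,l,L)$, which is (iii).
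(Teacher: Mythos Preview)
Your overall strategy matches the paper's: items (ii) and (iii) are done exactly as you outline, and for (i) the paper also performs the flow change of variables $u=(x_s-x^*)/h_N$, reducing the estimate to the Lipschitz constant of the unrescaled $Q$, after truncating on an event of the form $\{|V_T^{*,1}|/(h_N\sqrt N)\le \text{threshold}\}$ and treating the complement via the crude bound $|Q_{h_N}|\le\|Q\|_\infty/h_N$.

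There is, however, a genuine technical gap in your truncation step: a \emph{fixed} $R$ does not suffice under the bare hypotheses $h_N\to0$, $Nh_N^2\to\infty$. On the complementary event the crude bound yields a contribution
\[
\frac{2T\|Q\|_\infty}{h_N}\,\Pr_{x_0}^f\bigl(|V_T^{*,1}|>R\, h_N\sqrt N\bigr)\;\le\;\frac{2T\|Q\|_\infty\,C_T(p)}{R^p\,h_N^{p+1}N^{p/2}},
\]
and $h_N^{p+1}N^{p/2}$ need not diverge for any fixed $p$: take $h_N=\sqrt{(\log N)/N}$, so that $Nh_N^2=\log N\to\infty$, yet $h_N^{p+1}N^{p/2}=(\log N)^{(p+1)/2}N^{-1/2}\to0$ for every $p$. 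The paper's remedy is to let the threshold \emph{grow}, choosing $|V_T^{*,1}|/(h_N\sqrt N)\le K/\sqrt{h_N}$ where $[-K,K]\supset\mathrm{supp}\,Q$. Then (a) on the good event the integrand is supported in $|u|\le K+K/\sqrt{h_N}$, hence $|h_Nu|\le Kh_N+K\sqrt{h_N}\to0$ and the uniform lower bound $F(x^*+h_Nu)\ge F(x^*)-\eps_N\ge l/2$ (via $\|F'\|_\infty\le L+\|b'\|_\infty$) is available over the whole class; (b) on the bad event, Markov with $p=2$ gives probability at most $C_T(2)/(K^2h_NN)$, and the product with $2T\|Q\|_\infty/h_N$ is $O((Nh_N^2)^{-1})\to0$. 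With this single adjustment your argument is complete and coincides with the paper's.
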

The proof of this proposition is given in Section \ref{sec:proofs}.

 Let $ \tilde \pi^i ( ds, dz ) := \bar \pi^i ( ds, dz)-dsdz, 1 \le i \le N, $ be the compensated Poisson measures. In what follows we will frequently use the empirical measure of the finite neuron system given, for all $t\geq 0$,  by 
$$ \mu^N_t = \frac1N \sum_{i=1}^N \delta_{X^{N, i }_t } ,$$
and for any measurable test function $g,$ we will write $ \mu^N_t ( g) = \int_\R g(x) \mu^N_t (dx).$ For any $t\geq 0$, denote by $ M_t^{N, h }$  and $ B_t^{N, h}$ the following processes.
\begin{multline}\label{eq:atn}
M_t^{N, h }: =\frac 1N\int_{[0, t ] \times \R } Q_h ( x-x^*) ( \mu^N_J - \nu_J^N ) ( ds, dx)  \\
 =\frac 1N \sum_{ i = 1  }^N \int_{[0,t]\times \R_+}   Q_h ( X_{s-}^{N, i }-x^*) 1_{\{ z \le f ( X_{s-}^{N, i } ) \}}\tilde \pi^{i} (ds, dz) 
\end{multline} 
and 
\begin{equation}\label{eq:atn2}
B_t^{N, h} := \int_0^t \mu^N_s \left( Q_h ( \cdot  - x^*  )[ f( \cdot) - f( x^*) ]\right) ds=\frac1N \sum_{i=1}^N  \int_0^t Q_h ( X_s^{N, i } - x^*  )[ f( X_s^{N, i} ) - f( x^*) ] ds. 
\end{equation} 
Then we may rewrite the estimation error as 
\begin{equation}\label{eq:biais-variance}
  \hat f^{N, h }_{{T}} ( x^*) - f(x^* )  = \frac{ M_{{T}}^{N, h }}{ A_{{T}}^{N,h}} 
 + \frac{B_{{T}}^{N, h}}{ A_{{T}}^{N,h}}.
\end{equation} 

\begin{theorem}\label{theo:martingale}
Suppose that $ \nu_0 = \delta_{ x_0} $ for some fixed $x_0 \in \R$ and that Assumption \ref{ass:2} holds for this choice of $x_0.$ Let $(h_N )_N$ be such that $  h_N \to 0 $ and  
$ N h_N^2 \to \infty $ when $N\to\infty.$ 

(i) Suppose that $ f \in H( \beta, l, L) .$ We have convergence in law
$$ \sqrt{ N h_N} M_{{T}}^{N, h_N }  \stackrel{\mathcal L}{\to} {\mathcal N} ( 0, \sigma^2), $$
where 
$$ \sigma^2 = \frac{f(x^*)}{|F(x^*)| } \int_{- \infty }^{\infty} Q^2 (u) du.$$ 
(ii) Moreover, 
$$ \limsup_{N \to \infty}   \sup_{f \in H( \beta,l, L)} N h_N \E_{x_0}^f [| M_{{T}}^{N, h_N}|^2] < \infty.  $$

\end{theorem}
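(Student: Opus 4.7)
The plan is to treat $M^{N,h_N}$ as a pure jump, locally square integrable martingale driven by the compensated Poisson measures $\tilde\pi^i$, and to combine a sharp control of its angle bracket with a martingale CLT for jumps of vanishing size. Since the $\pi^i$ are mutually independent, cross terms vanish and the predictable quadratic variation equals
$$\langle M^{N,h}\rangle_t = \frac{1}{N^2}\sum_{i=1}^N\int_0^t Q_h^2(X_s^{N,i}-x^*)\,f(X_s^{N,i})\,ds.$$
Part (ii) then reduces, via the $L^2$ isometry and exchangeability, to a uniform bound on $h_N\,\E\int_0^T Q_{h_N}^2(X_s^{N,1}-x^*)\,f(X_s^{N,1})\,ds$. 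Part (i) will follow from a CLT for locally square integrable martingales (Rebolledo, or Jacod--Shiryaev VIII.3.11) once I establish (a) convergence in probability of $Nh_N\langle M^{N,h_N}\rangle_T$ to $\sigma^2$, and (b) the Lindeberg condition on the jumps of $\sqrt{Nh_N}\,M^{N,h_N}$.

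The core technical step is to understand $h_N\,\E\int_0^T Q_{h_N}^2(X_s^{N,1}-x^*)\,\phi(X_s^{N,1})\,ds$ for bounded continuous $\phi$; the naive Lipschitz bound on $Q_{h_N}^2$ would give a useless $h_N^{-3}$. I would apply the strong approximation $X_s^{N,1} = x_s + V_s^{N,1}/\sqrt{N}$ from Theorem~\ref{theo:strongapprox}, followed by the change of variables $u = (x_s-x^*)/h_N$ along the limit flow. Under Assumption~\ref{ass:2} and the uniform lower bound $|F|\ge\tilde l>0$ on the trajectory $\{x_s:0\le s\le T\}$ (implicit in the specification of $H(\beta,l,L)$), $s\mapsto x_s$ is a diffeomorphism with inverse $\gamma$ and $ds = h_N\,du/F(x_s)$, which rewrites the integral as
\begin{multline*}
h_N\int_0^T Q_{h_N}^2(X_s^{N,1}-x^*)\,\phi(X_s^{N,1})\,ds\\
= \int_{(x_0-x^*)/h_N}^{(x_T-x^*)/h_N} Q^2\!\left(u + \tfrac{V^{N,1}_{\gamma(x^*+h_N u)}}{h_N\sqrt{N}}\right) \frac{\phi\bigl(x^*+h_N u + V^{N,1}_{\gamma(x^*+h_N u)}/\sqrt{N}\bigr)}{F(x^*+h_N u)}\,du.
\end{multline*}
The factor $h_N^{-2}$ from $Q_{h_N}^2$ is absorbed exactly by the $h_N$ from the Jacobian and the explicit $h_N$ prefactor; the residual integrand is bounded and its $N$-dependence enters only through the fluctuations $V^{N,1}/(h_N\sqrt{N})$ and $V^{N,1}/\sqrt{N}$. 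Since $V^{N,1}$ has moments bounded uniformly in $N$ by Theorem~\ref{theo:strongapprox} and $Nh_N^2\to\infty$, those fluctuations vanish in probability, and dominated convergence yields $L^1$-convergence of the right-hand side to $\phi(x^*)/|F(x^*)|\cdot \int_\R Q^2(u)\,du$. Taking $\phi = f$ already gives (ii) uniformly over $f\in H(\beta,l,L)$, thanks to the bounds $f\le L$ and $|F|\ge\tilde l$.

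For (i), the same computation transfers to $Nh_N\langle M^{N,h_N}\rangle_T = \tfrac{1}{N}\sum_{i=1}^N Z_i^{(N)}$ with $Z_i^{(N)} := h_N\int_0^T Q_{h_N}^2(X_s^{N,i}-x^*)\,f(X_s^{N,i})\,ds$, these being equidistributed across $i$. Each $Z_i^{(N)}$ converges to $\sigma^2$ in $L^1$, so by exchangeability
$$\E\bigl|N^{-1}\textstyle\sum_i Z_i^{(N)} - \sigma^2\bigr| \le \E\bigl|Z_1^{(N)} - \sigma^2\bigr| \to 0,$$
giving the convergence in probability required in (a). The Lindeberg condition (b) is trivial: each jump of $M^{N,h_N}$ has size at most $\|Q_{h_N}\|_\infty/N = \|Q\|_\infty/(Nh_N)$, so every jump of $\sqrt{Nh_N}\,M^{N,h_N}$ is at most $\|Q\|_\infty/\sqrt{Nh_N}\to 0$, since $Nh_N\to\infty$ is forced by $Nh_N^2\to\infty$ and $h_N\to 0$. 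The martingale CLT then delivers the Gaussian limit $\mathcal N(0,\sigma^2)$. The main obstacle throughout is the $h_N^{-1}$ blow-up of $Q_{h_N}^2$, which naive Lipschitz bounds cannot absorb; the change of variables along the limit flow, identified in the introduction as the signature device of this paper, is exactly what makes the argument go through.
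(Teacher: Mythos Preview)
Your proposal is correct and follows essentially the same route as the paper: compute the angle bracket, replace each particle's contribution by the deterministic analogue via the strong approximation $X^{N,i}_s=x_s+V^{N,i}_s/\sqrt N$ and the change of variables $u=(x_s-x^*)/h_N$ along the limit flow, then invoke the martingale CLT of Jacod--Shiryaev VIII.3.11, the jump bound $\|Q\|_\infty/\sqrt{Nh_N}\to0$ supplying the small-jump condition. The one place to tighten is your appeal to ``dominated convergence'' for the $L^1$-limit of $Z_1^{(N)}$: the effective $u$-support $\{|u|\le K+|V_T^{*,1}|/(h_N\sqrt N)\}$ is random and $N$-dependent, so the paper handles this by localizing on $\{|V_T^{*,i}|/(h_N\sqrt N)\le K/\sqrt{h_N}\}$ and bounding the two pieces separately exactly as in the proof of Proposition~\ref{prop:ANh} (uniform integrability would also do the job).
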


\begin{theorem}\label{theo:biais}
Grant all assumptions of Theorem \ref{theo:main}.\\
(i) If  $ h_N = o ( N^{ - \frac{1}{ 2 \beta + 1 } }) ,$  then for all $ \varepsilon > 0, $ as $ N \to \infty ,$ 
$$ \sup_{ f \in H ( \beta , l , L )} \Pr_{x_0}^f  (  \sqrt{N h_N} | B_{{T}}^{N, h_N } | \geq \varepsilon ) \to 0 .$$
(ii) If $ h_N =  {\mathcal O} ( N^{ - \frac{1}{ 2 \beta + 1 } }) ,$ then 
$$ \limsup_{N \to \infty}   \sup_{f \in H( \beta,l, L)}{N h_N } \E_{x_0}^f | B_{{T}}^{N, h_N} |^2 < \infty .$$ 
\end{theorem}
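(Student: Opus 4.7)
My strategy is to decompose $B_T^{N, h_N}$ into a deterministic bias and a stochastic remainder, using the strong approximation (Theorem \ref{theo:strongapprox}) together with the change-of-variables technique described in the introduction. Substituting $X_s^{N,i} = x_s + V_s^{N,i}/\sqrt{N}$ and then performing $u = (x_s - x^*)/h$, which is legitimate because $F$ has constant sign in a neighborhood of $x^*$ for $f\in H(\beta,l,L)$ and $ds = h\, du/F(x^*+hu)$ cancels the $1/h$ in $Q_h$, I obtain
$$
B_T^{N, h} = \frac{1}{N}\sum_{i=1}^N \int_{(x_0-x^*)/h}^{(x_T-x^*)/h} Q\!\left(u + \frac{\tilde V_u^{N,i}}{h\sqrt N}\right) \frac{f\!\left(x^* + hu + \tilde V_u^{N,i}/\sqrt N\right)- f(x^*)}{F(x^* + hu)}\, du,
$$
with $\tilde V_u^{N,i} := V^{N,i}_{\gamma(x^* + hu)}$ and $\gamma$ the inverse flow. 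For $h$ small enough, Assumption \ref{ass:2} ensures the interval of integration contains the compact support of $Q$. I then split $B_T^{N,h} = J^h + R^{N,h}$, where $J^h := \int Q(u)[f(x^*+hu)-f(x^*)]/F(x^*+hu)\, du$ is the deterministic limit obtained by dropping $V^{N,i}$.

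For the bias $J^h$, set $g(y) := (f(y) - f(x^*))/F(y)$. Since $F = b + wf$ is Lipschitz with controlled constant, $|F(x^*)|\ge l$, and hence $|F(x^*+hu)|\ge l/2$ uniformly on $\mathrm{supp}(Q)$ for $h$ small, the function $g$ lies in $C^{k,\alpha}$ with norm controlled uniformly over $H(\beta, l, L)$. Since $g(x^*) = 0$, a Taylor expansion of order $k$ around $x^*$ combined with the vanishing moment conditions $\int u^j Q(u)\,du = 0$ for $1\le j\le k$ and the integrability $\int |u|^\beta |Q(u)|\,du < \infty$ yields $|J^h| \le C h^\beta$, uniformly in $f \in H(\beta, l, L)$.

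For the stochastic remainder $R^{N,h} = N^{-1}\sum_i D^{N,h,i}$, I would split each $D^{N,h,i}$ into a piece isolating $Q(u+W) - Q(u)$ with $W=\tilde V_u^{N,i}/(h\sqrt N)$ and a piece isolating $f(\cdot + \tilde V_u^{N,i}/\sqrt N) - f(\cdot)$, then apply the Lipschitz bounds $|Q(u+W)-Q(u)|\le \|Q\|_{Lip}|W|$ and $|f(y + V/\sqrt N) - f(y)|\le L|V|/\sqrt N$. Bounding $1/|F|$ on $\mathrm{supp}(Q)$ and tracking the bounded shift of the integration support (controlled because $\E\sup_{s\le T}|V_s^{N,i}|^p$ is uniformly bounded and $h\sqrt N\to\infty$), I reach the pointwise estimate $|D^{N,h,i}|\le C \,(V_*^{N,i}/\sqrt N)\,(1+V_*^{N,i}/(h\sqrt N))$, with $V_*^{N,i}:=\sup_{s\le T}|V_s^{N,i}|$. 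Combining the moment estimates on $V_*^{N,i}$ with the Jensen inequality $\bigl(\frac{1}{N}\sum_i |D^{N,h,i}|\bigr)^2\le \frac{1}{N}\sum_i |D^{N,h,i}|^2$ then gives $\E_{x_0}^f|R^{N,h}|^2 \le C/N$ uniformly in $f \in H(\beta, l, L)$.

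Combining the two estimates, $\E_{x_0}^f|B_T^{N,h_N}|^2 \le C(h_N^{2\beta} + 1/N)$. Part (ii) follows since $Nh_N\,\E_{x_0}^f|B_T^{N,h_N}|^2 \le C(Nh_N^{2\beta+1} + h_N)$ stays bounded when $h_N = \mathcal{O}(N^{-1/(2\beta+1)})$, and part (i) follows from Markov's inequality, $\Pr_{x_0}^f(\sqrt{Nh_N}|B_T^{N,h_N}|\ge \varepsilon) \le \varepsilon^{-2} C (Nh_N^{2\beta+1} + h_N)$, which tends to $0$ when $h_N = o(N^{-1/(2\beta+1)})$. The main difficulty is the stochastic remainder: a naive Lipschitz bound on $Q_h$ would introduce the catastrophic factor $\|Q_h\|_{Lip} = h^{-2}\|Q\|_{Lip}$, and the whole point of the change of variables is precisely to trade $\|Q_h\|_{Lip}$ for $\|Q\|_{Lip}$, after which the moment bounds on $V^{N,i}$ are strong enough to close the estimate.
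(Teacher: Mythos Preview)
Your strategy matches the paper's closely: same decomposition $B_T^{N,h}=J^h+R^{N,h}$ into a deterministic bias plus stochastic remainder, same change of variables $u=(x_s-x^*)/h$ on the account of the limit flow, same Taylor-plus-moment-conditions treatment of $J^h$, and same Lipschitz splitting of each $D^{N,h,i}$ into a $Q(u+W)-Q(u)$ piece and an $f(\cdot+V/\sqrt N)-f(\cdot)$ piece. Your bias bound $|J^h|\le Ch^\beta$ and your target $\E|R^{N,h}|^2\le C/N$ are both what the paper obtains, and your concluding Markov/combination step is identical.

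There is, however, one genuine gap in the remainder estimate. Your pointwise bound $|D^{N,h,i}|\le C(V_*^{N,i}/\sqrt N)(1+V_*^{N,i}/(h\sqrt N))$ with a \emph{deterministic} $C$ is not justified as written. After the change of variables the integrand carries the factor $1/F(x^*+hu)$, and the support of $Q(u+W)-Q(u)$ is the \emph{random} set $\{|u|\le K+|W|\}$ with $W=\tilde V_u^{N,i}/(h\sqrt N)$, not $\mathrm{supp}(Q)$. The only uniform lower bound available in $H(\beta,l,L)$ is $|F(x^*)|\ge l$; Taylor then gives only $|F(x^*+hu)|\ge l-\|F'\|_\infty(hK+V_*^{N,i}/\sqrt N)$, which can fail for large $V_*^{N,i}$. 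The paper closes this by first localizing on $\{|V_T^{*,i}|/(h\sqrt N)\le K/\sqrt h\}$: there $|hu|\le hK+K\sqrt h\to 0$, so $|F(x^*+hu)|\ge l-\eps_N$ uniformly, and the pointwise estimate goes through (with an additional cubic term $V_*^3/(h^2N^{3/2})$ that you have dropped --- harmless since $h^2N\to\infty$, but it does appear). On the complementary event, the paper uses the elementary observation that $|Q_h(a-x^*)[f(a)-f(x^*)]|\le \|Q\|_\infty LK$ is \emph{bounded} (no $1/h$ loss, because the Lipschitz increment of $f$ absorbs the support radius $Kh$), so this piece is controlled by the small probability of the bad event via Markov with higher moments of $V_*$. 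Your phrase ``tracking the bounded shift of the integration support'' is a placeholder for exactly this two-step localization; once you insert it, your proof coincides with the paper's.
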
 
In what follows, we first present the proofs of these preliminary results. They are based on the study of the mean field limit of the finite neuron system together with a control of a quantified rate of convergence. These probabilistic results are discussed in the next section. Once the preliminary results settled, we will then be able to conclude the proof of our main Theorem \ref{theo:main}.

\section{Strong convergence to the mean field limit}\label{sec:3}
We have already mentioned above that the system \eqref{eq:finitesystem} exhibits the propagation of chaos property  and that,  under our conditions,  in the $N \to \infty-$limit, each neuron's potential is described by an ordinary differential equation given by 
\begin{equation*}
 d x_t = F ( x_t) dt,\ F( x) = b(x) + w f( x) ,\ t \geq 0, 
 \end{equation*}
 starting from the initial value $x_0 $ at time $0.$ 
 
 Adapting a classical result obtained by Kurtz in \cite{Kurtz} to our context  implies that the following strong error bound holds, uniformly within our H\"older class of rate functions. 
 
 \begin{theorem}[cf. Theorem 2.2 of  \cite{Kurtz}]\label{theo:strongapprox}
 Suppose that $ \nu_0 = \delta_{ x_0} $ for some fixed $x_0 \in \R. $ Fix some $T > 0 .$ Then we have for any $ 1 \le i \le N$ and any $ 0 \le t \le T,$    
\begin{equation}\label{eq:strong}
X_t^{N, i }= x_t + \frac{1}{\sqrt{N}} V_t^{N, i } ,
\end{equation}
where for each $ p \geq 1,$ 
 \begin{equation}\label{eq:wasserstein0}
  \sup_{ f \in H( \beta, l, L )} \E_{x_0}^f \left( \sup_{ t \le T } | V_t^{N, i } |^p \right) \le C_T (p) ,
 \end{equation}  
\bdc{for a constant $ C_T (p) $ that depends on $T$ and on $p$ but not on $N.$ } \ec 
\end{theorem}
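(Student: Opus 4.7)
The plan is to work with the rescaled fluctuation $V_t^{N,i} := \sqrt{N}\bigl(X_t^{N,i} - x_t\bigr)$ and to derive a Gr\"onwall inequality for $\psi_N(t) := \E_{x_0}^f \sup_{s \le t} |V_s^{N,i}|^p$ in which the only genuinely stochastic input is a compensated-Poisson martingale of size $O(1/\sqrt{N})$. Under $\Pr_{x_0}^f$ we have $X_0^{N,i} = x_0$ a.s., so $V_0^{N,i} = 0$. Writing \eqref{eq:finitesystem} in compensated form and subtracting the ODE \eqref{eq:limitequation} for $x_t$ yields
\begin{equation*}
X_t^{N,i} - x_t = \int_0^t \bigl(b(X_s^{N,i}) - b(x_s)\bigr)\, ds + w \int_0^t \bigl(\mu_s^N(f) - f(x_s)\bigr)\, ds + R_t^{N,i} + M_t^{N,i},
\end{equation*}
where $R_t^{N,i} := -(w/N)\int_0^t f(X_s^{N,i})\,ds$ is an $O(1/N)$ remainder (arising from the difference between $\sum_{j\neq i}$ and $\sum_{j} = N\mu_s^N$), and
\begin{equation*}
M_t^{N,i} := \frac{1}{N}\sum_{j\neq i}\int_0^t \int_{\R_+}\!\int_\R u\, \mathbf{1}_{\{z \le f(X_{s-}^{N,j})\}}\, \tilde\pi^j(ds, dz, du), \quad \tilde\pi^j := \pi^j - ds\,dz\,\nu(du),
\end{equation*}
is the aggregated compensated-Poisson martingale carrying the noise of the neurons $j \neq i$.

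Next I would exploit the Lipschitz hypotheses to linearise the drift: $b$ is globally Lipschitz by Assumption \ref{ass:1}, and every $f \in H(\beta, l, L)$ satisfies $|f'| \le L$ and $f \le L$ uniformly on $\R$, so that $|\mu_s^N(f) - f(x_s)| \le (L/N)\sum_{j=1}^N |X_s^{N,j} - x_s|$. Multiplying the decomposition by $\sqrt{N}$ gives the pointwise bound
\begin{equation*}
|V_t^{N,i}| \le \|b\|_{Lip}\int_0^t |V_s^{N,i}|\,ds + |w|L\int_0^t \frac{1}{N}\sum_{j=1}^N |V_s^{N,j}|\,ds + \frac{|w|LT}{\sqrt{N}} + \sqrt{N}\,|M_t^{N,i}|.
\end{equation*}
For the martingale, a Burkholder--Davis--Gundy (Kunita-type) inequality for purely discontinuous martingales, combined with the orthogonality of the $\tilde\pi^j$ across $j$, yields for every $p \ge 2$
\begin{equation*}
\E_{x_0}^f \sup_{s\le t}|M_s^{N,i}|^p \le C_p\, \E_{x_0}^f [M^{N,i}]_t^{p/2} + C_p \sum_{j\neq i}\E_{x_0}^f\int_0^t \int_{\R_+}\!\int_\R \frac{|u|^p}{N^p}\, \mathbf{1}_{\{z \le f(X_s^{N,j})\}}\, ds\,dz\,\nu(du).
\end{equation*}
The uniform bound $f \le L$ over $H(\beta,l,L)$ together with the finiteness of every moment of $\nu$ (Assumption \ref{ass:1}(3)) make both terms $O(N^{-p/2})$, whence $\sup_{N}\sup_{f}\E_{x_0}^f \sup_{s\le t} |\sqrt{N}\,M_s^{N,i}|^p < \infty$.

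To close the loop I would invoke exchangeability: since the particles all start from $x_0$ and the system is invariant under permutation of the indices, $\psi_N(t)$ does not depend on $i$, and Jensen's inequality applied to the empirical average $N^{-1}\sum_j |V_s^{N,j}|$ bounds its $p$-th moment by $\psi_N(s)$. Raising the pointwise bound to the $p$-th power, taking expectation and using H\"older on the time integrals leads to
\begin{equation*}
\psi_N(t) \le C_p(T) + C_p(T)\int_0^t \psi_N(s)\,ds,
\end{equation*}
where $C_p(T)$ depends only on $T$, $p$, $\|b\|_{Lip}$, $|w|$, $L$ and the moments of $\nu$, hence neither on $N$ nor on the particular $f \in H(\beta,l,L)$. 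Gr\"onwall's lemma then yields $\psi_N(T) \le C_p(T)$, and the cases $p \in [1,2)$ follow from H\"older.

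The main obstacle is the martingale moment estimate with the correct $N^{-p/2}$ scaling: the large-jump contribution in Kunita's inequality, bounded naively by $\sum_{j\neq i}(|u|/N)^p$, only gives the rate $N^{1-p}$, and the sharp rate for large $p$ must be harvested from the quadratic-variation term, where the orthogonality of the $\tilde\pi^j$ for distinct $j$ is crucial; finiteness of \emph{all} moments of $\nu$, imposed in Assumption \ref{ass:1}(3), is precisely what is needed for the BDG bound to close for every $p \ge 1$. Once this martingale bound is in hand, the rest of the argument is a standard Kurtz-type propagation-of-chaos computation.
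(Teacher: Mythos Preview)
Your argument is correct and closes cleanly: the decomposition into drift, empirical mean, remainder and compensated martingale is right, exchangeability lets you replace $\E\bigl(\tfrac1N\sum_j|V_s^{N,j}|\bigr)^p$ by $\psi_N(s)$, and BDG together with Jensen on the $N^{-1}\sum_j$ average gives the needed $O(N^{-p/2})$ martingale bound. (Your worry about the large-jump term in Kunita's inequality is unwarranted: the rate $N^{1-p}$ is in fact \emph{better} than $N^{-p/2}$ for $p\ge 2$, so that term is never the bottleneck.) One small point you leave implicit is the a priori finiteness of $\psi_N(t)$ for fixed $N$, needed to run Gr\"onwall; this follows because $f\le L$ makes the total jump rate bounded.

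The paper takes a genuinely different route. It first observes that for $i\neq j$ the difference $X_t^{N,i}-X_t^{N,j}$ satisfies $d(X_t^{N,i}-X_t^{N,j})=(b(X_t^{N,i})-b(X_t^{N,j}))\,dt$ up to an $O(1/N)$ term, because the mean-field interaction and the martingale part are \emph{identical} for all particles. A pathwise Gr\"onwall then gives $\sup_{t\le T}|X_t^{N,i}-X_t^{N,j}|=O(1/N)$, allowing the replacement of $\tfrac{w}{N}\sum_j f(X_s^{N,j})$ by $wf(X_s^{N,i})$ up to $O(1/N)$. This yields a closed equation $X_t^{N,i}=x_0+\int_0^t F(X_s^{N,i})\,ds+\tfrac{1}{\sqrt N}L_t^N+O(1/N)$ for each single particle, and a second pathwise Gr\"onwall gives the \emph{pointwise} bound $|V_t^{N,i}|\le C_T\bigl(\sup_{s\le T}|L_s^N|+O(1/\sqrt N)\bigr)$, after which only the $L^p$-moments of the single martingale $L^N$ need to be controlled. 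Your $L^p$-Gr\"onwall via exchangeability is the standard propagation-of-chaos device and works for general mean-field systems; the paper's argument exploits the specific feature of this model (common interaction for all particles) to obtain a pathwise bound first, which is slightly stronger and makes the moment computation shorter.
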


Notice that the above result implies in particular that for any Lipschitz continuous function $g : \R \to \R$ and any $ t \le T,$  
 \begin{equation}\label{eq:wasserstein}
  \E_{x_0}^f \int_0^t |g ( X^{N, i }_s) - g( x_s) | ds  \le C_T t \|g\|_{Lip} N^{ - 1/2}.
 \end{equation}
 
For the convenience of the reader, we give the proof of Theorem \ref{theo:strongapprox} in the Appendix section.

\section{Proof of Theorem \ref{theo:martingale} and of Theorem \ref{theo:biais}}\label{sec:proofs}
We start this section with the proof of Proposition \ref{prop:ANh}. The arguments of this proof will be used several times in the sequel. So we give all details here and then only the main ideas in the sequel. 

\begin{proof} Using definition \eqref {eq:at}, we have that 
$$ |A_T^{N,h{_N}}-A_T^{h_N}|\leq\frac 1N\sum_{i=1}^N\int_0^T\left |Q_{h_N}(X_s^{N,i}-x^*) -Q_{h_N}(x_s-x^*)\right | ds .$$ 
By Theorem \ref {theo:strongapprox}, we have that 
$$ \frac{ X_s^{N,i}-x^*}{h_N} = \frac{x_s - x^* }{h_N} + \frac{V_s^{N, i } }{ h_N \sqrt{N}} . $$
In what follows we will also use the notation $ |V_T^{ * , i }| := \sup_{ s\le T }| V_s^{N, i }|. $

Recall that $ Q$ is bounded and of compact support. Let us assume that this support is included in some interval $ [ - K , K ],$ where $ K > 0 .$  Then we can  write
\begin{multline}\label{eq:J1andJ2} 
|A_T^{N,h_N}-A_T^{h_N}|\leq \\
\frac 1N\sum_{i=1}^N\int_0^T\left |Q_{h_N}(X_s^{N,i}-x^*) -Q_{h_N}(x_s-x^*)\right | ds  \left(1_{\left\{\frac{|V_T^{*,i}|}{h_N\sqrt N}\leq \frac{K}{\sqrt h_N}\right\}}+1_{\left\{\frac{|V_T^{*,i}|}{h_N\sqrt N}> \frac K{\sqrt h_N}\right\}}\right ):= J_1+J_2.
\end{multline}
Using that $Q$ is bounded  together with the fact that $Nh_N^2\to 0,$ by exchangeability and Markov's inequality, recalling also \eqref{eq:wasserstein0}, 
\begin{multline}\label{eq:J1}
\sup_{ f \in H ( \beta, l , L )} \E_{x_0}^f  (J_2) 
\leq \bdc{2}\ec \|Q\|_\infty  \frac T {h_N}  \sup_{ f \in H ( \beta, l , L)}  \E_{x_0}^f \left( 1_{\{|V_T^{*, 1}/h_N\sqrt N|> K/\sqrt h_N\}}\right) \\
\leq \bdc{2}\ec \|Q\|_\infty \frac{T}{h_N} \frac{ \sup_{ f \in H ( \beta, l , L )}  \E_{x_0}^f |V_T^{*,1}|^{2}}{K^{2}{N}h_N} = \frac{ \|Q\|_\infty  T  C_T ( 2) }{K^2 } ( N h_N^2)^{ -1 } \to 0, 
\end{multline}
as $ N \to \infty .$ 

To deal with $J_1,$ we will use  the  change of variables $ u = (x_s-x^*)/h_N, $ such that $x_s = x^* + h_N u . $ This change of variables is possible since, due to Assumption \ref{ass:2}, the function $t\to x(t)$ is strictly monotone  on $[0,T]$ and $F(x)\neq 0 $ for all $ x\in [x_0,x_T].$ \footnote{\bdc{Indeed, by Assumption \ref{ass:2}, we know that $ x_0 \neq x_T,$ which implies that $ x_0$ cannot be a fixed point of the flow, that is, $ F ( x_0) \neq 0.$ It follows then from the properties of one dimensional ODE's that $ F ( x_t) $ is of the same sign as $ F ( x_0) $ for all $ t \in [0, T ], $ and that $ F ( x_t) \neq 0 $ for all $ 0 \le t \le T.$} \ec}  

So $ t \mapsto x_t $ is invertible. Let us write  $ \gamma (y) $ for the inverse flow of $x_s$, that is, $ x_{\gamma(y) } = y , $ such that $ s = \gamma ( x^* + h_N u ).$
Then
\begin{equation}\label{eq:15}
J_1=\frac 1N\sum_{i=1}^N\int_{\frac {x_0-x^*}{h_N}}^{\frac {x_T-x^*}{h_N}}\left| Q ( u + \frac{V^{N, i }_{ \gamma (x^* + h_N u  ) }}{ h_N \sqrt{N}} ) - Q(u) \right| \frac 1{ F ( x^* + h_N u ) } 1_{\left\{\frac{|V_T^{*,i}|}{h_N\sqrt N}\leq \frac K{\sqrt h_N}\right\}} du   .
\end{equation}
Here we have used that $F$ is of constant sign on $ [ x_0, x_T ]$ and that $ F$ is positive if  $ x_0 < x_T, $ negative else.

Now, recall that $|u| > K $ implies that $ Q(u)=0$ and note that
\begin{equation}\label{eq:randomsupport}
|u| >  K+\Big|\frac{V^{N, i }_{ \gamma (x^* + h_N u  ) }}{ h_N \sqrt{N}}\Big| \implies Q(u)= Q ( u + \frac{V^{N, i }_{ \gamma (x^* + h_N u  ) }}{ h_N \sqrt{N}})=0.
\end{equation}
Therefore,
\begin{multline}
J_1=\\
\frac 1N\sum_{i=1}^N\int_{\frac {x_0-x^*}{h_N}}^{\frac {x_T-x^*}{h_N}}\left| Q ( u + \frac{V^{N, i }_{ \gamma (x^* + h_N u  ) }}{ h_N \sqrt{N}} ) - Q(u) \right| \frac 1{ F ( x^* + h_N u ) } 1_{\left\{|u|\leq K+\frac{|V_{\gamma(x^*+h_Nu)}^{N,i}|}{h_N \sqrt N}\right\}} \bdc{1_{\left\{\frac{|V_T^{*,i}|}{h_N\sqrt N}\leq \frac K{\sqrt h_N}\right\}} du  }\ec \\
\leq \|Q\|_{Lip}\frac 1N\sum_{i=1}^N\int_{\frac {x_0-x^*}{h_N}}^{\frac {x_T-x^*}{h}}\left| \frac {V^{N,i}_{\gamma(x^*+h_Nu)}}{h_N\sqrt N}\right|\frac 1{ F ( x^* + h_N u ) } 1_{\left\{|u|\leq K+\frac{|V_{\gamma(x^*+h_Nu)}^{N,i}|}{h_N \sqrt N}\right\}} \bdc{1_{\left\{\frac{|V_T^{*,i}|}{h_N\sqrt N}\leq \frac K{\sqrt{ h_N}}\right\}} du} \ec .
\end{multline} 
The following arguments will be used at several places in this article. First of all,  we suppose \bdc{without loss of generality }\ec (w.l.o.g.) that $ F$ is positive on $ [ x_0, x_T].$ 
By definition of $ H ( \beta, l, L), $ for all $ f \in H( \beta , l ,L),$ we have $ \| F' \|_\infty \le \|f'\|_\infty + \|b'\|_\infty \le L + \| b'\|_\infty $ where $ \| b' \|_\infty $ is finite by assumption. 

\bdc{Recall that $ |V_T^{ * , i }| = \sup_{ s\le T }| V_s^{N, i }|. $} \ec On the event \bdc{$\left\{ \frac{|V_T^{*,i}|}{h_N\sqrt N}\leq \frac K{\sqrt h_N}\right\}$ } \ec we are considering, we have that  $ |u| \le  K+|V_{\gamma(x^*+h_Nu)}^{N,i}| / (h_N \sqrt N) \le K + K/ \sqrt{h_N} .$ So for some $ \vartheta \in (0, 1 ), $ by Taylor's formula, 
$$F(x^* + h_N u )=F(x^*)+F'( x^* + \vartheta h_N u) h_Nu \geq F(x^*)- ( L + \|b' \|_\infty ) h_N(K+K /\sqrt h_N).$$ 
If we define 
\begin{equation}\label{eq:epsilonN}
 \eps_N = ( L + \|b' \|_\infty ) h_N(K+K /\sqrt h_N) ,
\end{equation}  
which tends to $ 0$ as $ N \to \infty,$ we may conclude that 
\begin{equation}\label{eq:taylor}
\forall |u| \le K+\frac{|V_{\gamma(x^*+h_Nu)}^{N,i}|}{h_N \sqrt N},  \mbox{ we have }  F( x^* + h_N u ) \geq  F(x^*)-\eps_N  \;  \mbox{ on } \left\{ \frac{|V_T^{*,i}|}{h_N\sqrt N}\leq \frac K{\sqrt h_N}\right\} .
\end{equation}

For $ N $ sufficiently large, $ F( x^* ) - \eps_N > 0, $ such that 
\begin{multline}
J_1\leq \frac {\|Q\|_{Lip}}{F(x^*)-\eps_N}
\frac 1N\sum_{i=1}^N\int_{-\infty}^{+\infty} \left| \frac {V^{N,i}_{\gamma(x*+h_Nu)}}{h_N\sqrt N}\right|  1_{\left\{|u|\leq K+\frac{|V_{\gamma(x^*+h_Nu)}^{N,i}|}{h_N \sqrt N}\right\}}  du\leq\\
\frac {\|Q\|_{Lip}}{F(x^*)-\eps_N} \frac{2}{N}\sum_{i=1}^N\frac {|V_T^{*,i}|}{h_N\sqrt N}\left(K+\frac {|V_T^{*,i}|}{h_N\sqrt N}\right).
\end{multline}
Finally, $ | F ( x^*) | \geq l $ by definition of $ H ( \beta, l , L ) , $ such that, using \eqref{eq:wasserstein0} once more, 
$$ \sup_{ f \in H ( \beta, l, L ) } \E_{x_0}^f ( J_1) \to 0 $$
as $ N \to \infty,$ under the condition that $Nh_N^2\to\infty . $ All in all, we have therefore established that 
\begin{equation}
\sup_{ f \in H( \beta,l,  L )} \E_{x_0}^f |A_T^{N,h}-A_T^h|\to 0 
\end{equation}
as $ N \to \infty .$ 

\bdc{We finally prove that $A_T^{h_N}\to |F(x^*)|^{-1}$ as $N \to \infty.$ Indeed, using once more the change of variables $u = (x_s-x^*)/h_N,$ we have that 
$$ A_t^{h_N} = \int_{\frac {x_0-x^*}{h_N}}^{\frac {x_T-x^*}{h_N}} Q(u) \frac{1}{ F ( x^* + h_N u ) } du .$$  

Now, if $ F( x^* ) > 0, $ then necessarily $ x_0 < x^* < x_{T} , $ since $F$ is of constant sign on $ [x_0, x_T].$ This implies that  $\frac{x_0-x^*}{h_N} \to - \infty $ while $ \frac{x_T-x^*}{h_N} \to \infty . $ In the opposite case where  $ F(x^* ) < 0, $ we have  $\frac{x_0-x^*}{h_N} \to \infty $ while $ \frac{x_T-x^*}{h_N} \to - \infty .$ In both cases, since $ Q$ is of compact support,  we obtain convergence to $\frac{1}{|F(x^*)|} \int_{ - \infty }^{\infty} Q (u) du =\frac{1}{|F(x^*)|},$ since $\int Q(u) du = 1.$ }\ec

The second part of item (ii) and item (iii) follow from Markov's inequality and the fact that $ |F(x^* ) | \geq  l > 0 $ uniformly within our H\"older class. 
\end{proof}

The same arguments will allow us to study now the bias term which is the most involved part of our proof.

\subsection{Proof of Theorem \ref{theo:biais}}
Recall that the bias term is given by 
$$ B_T^{N, h_N} =  \int_0^T \mu^N_s \left( Q_{h_N} ( \cdot  - x^*  )[ f( \cdot) - f( x^*) ]\right) ds.$$
We introduce the associated non random limit object which is given by
$$ B_T^{h_N} =  \int_0^T \ Q_{h_N} (x_s  - x^*  )[ f( x_s) - f( x^*) ]ds.$$
{\bf Step 1.}
Our first goal is to get a bound on $   |B_T^{N, h_N}  -  B_T^{ h_N} |$ and to show that this bound, even multiplied with $ \sqrt{N h_N},$ converges to $0.$ 

To do so, we use the same decomposition as in \eqref{eq:J1andJ2} and write $B_T^{N,  h_N}- B_T^{ h_N} =: \tilde J_1 + \tilde J_2,$ where
\begin{multline*}
\tilde J_1 = \frac1N \sum_{i=1}^N 1_{\left\{\frac{|V_T^{*,i}|}{h_N\sqrt N}\leq \frac{K}{\sqrt h_N}\right\}}  \\
\int_0^T \left(  Q_{h_N} ( X^{N, i }_s   - x^*) [ f(  X^{N, i }_s ) - f( x^*) ]  - Q_{h_N} (x_s  - x^*  )[ f( x_s) - f( x^*) ]\right) ds ,
\end{multline*}
and 
\begin{multline*}
\tilde J_2 = \frac1N \sum_{i=1}^N 1_{\left\{\frac{|V_T^{*,i}|}{h_N\sqrt N} >  \frac{K}{\sqrt h_N}\right\}}  \\
\int_0^T \left(  Q_{h_N} (X^{N, i }_s    - x^*) [ f(  X^{N, i }_s  ) - f( x^*) ]  - Q_{h_N} (x_s  - x^*  )[ f( x_s) - f( x^*) ]\right) ds .
\end{multline*}

To deal with $ \tilde J_2,$ we start noticing that for any real numbers $a, b,$ uniformly for all $ f \in H( \beta, l, L),$  
\begin{equation} \label{eq:qtimesfbound}
|Q_{h_N} (a - b ) [ f(a) - f(b) ]|\leq  \frac{1}{h_N} \|Q\|_{\infty}\|f\|_{Lip}Kh_N  \le \|Q\|_\infty L K ,
\end{equation}
\bdc{where we have used that $ Q$ is bounded and of compact support, with support included in $ [ - K , K ],$ for some $ K > 0 ,$ and that $f$ is Lipschitz continuous.
Applying (\ref{eq:qtimesfbound}) with $ a = X^{N, i }_s $ or $ a = x_s $ and $b = x^*, $ we deduce from this that}\ec 
\begin{equation}\label{eq:upperboundtildej2}
 \tilde J_2 \le \bdc{2}\ec \|Q\|_\infty L K  T \frac1N \sum_{i=1}^N  1_{\left\{\frac{|V_T^{*,i}|}{h_N\sqrt N} >  \frac{K}{\sqrt h_N}\right\}}  ,
\end{equation}
such that, as in the proof of Proposition \ref{prop:ANh}, 
$$ \sup_{ f \in H( \beta, l, L) } \E_{x_0}^f ( \tilde J_2 )  \le \|Q\|_\infty L K  T \frac{C_T (2) }{K^2 h_N N } .$$  
In particular, since $ h_N N \to \infty  $ (which follows from $h_N^2 N \to \infty $ and the fact that $ h_N \to 0$), 
\begin{equation}\label{eq:tildeJ2} \lim_{ N \to \infty }  \sup_{ f \in H( \beta, l, L) }  \sqrt{N h_N} \E_{x_0}^f ( \tilde J_2 ) = 0.
\end{equation} 

To deal with $ \tilde J_1,$ we use again  the change of variables $ u = (x_s-x^*)/h_N, $ as in the proof of Proposition \ref{prop:ANh}.  Then 
$$ \tilde J_1 =\frac1N \sum_{i=1}^N 
(E_1^{N, i} \bdc { - }\ec  E_2^{N, i} ),$$ 
 where 
\begin{multline*}
 E_1^{N, i} =  \int_{\frac{x_0-x^*}{h_N}}^{\frac{x_T- x^*}{h_N}} [ Q ( u + \frac{V^{N, i }_{ \gamma (x^* + h_N u  ) }}{ h_N \sqrt{N}} ) - Q(u) ]  [ f( x^* + h_N u + \frac{ V^{N, i }_{ \gamma (x^* + h_N u  ) }}{\sqrt{N} } ) - f(x^*) ] \\
 \frac{1}{ F ( x^* + h_N u ) } du 1_{\left\{\frac{|V_T^{*,i}|}{h_N\sqrt N} \le   \frac{K}{\sqrt h_N}\right\}}  
\end{multline*} 
and 
$$ E_2^{N,i} = \int_{\frac{x_0-x^*}{h_N}}^{\frac{x_T- x^*}{h_N}}  Q (  u )  [  f( x^* + h_N u + \frac{ V^{N, i }_{ \gamma (x^* + h_N u  ) }}{\sqrt{N} } ) - f ( x^* + h_Nu )  ]  \frac{1}{ F ( x^* + h_N u ) } du 1_{\left\{\frac{|V_T^{*,i}|}{h_N\sqrt N} \le   \frac{K}{\sqrt h_N}\right\}} .$$ 

In what follows, we suppose w.l.o.g. that $ x_0 < x_T $ such that $F$ is of positive sign on $ [x_0, x_T].$ Due to the compact support of $Q,$ the domain of integration within the integral defining $ E_1^{N, i } $  can be restricted to values of $u$ such that $|u| \le K+\Big|\frac{V^{N, i }_{ \gamma (x^* + h_N u  ) }}{ h_N \sqrt{N}}\Big| .$ Using that $Q$ is Lipschitz and that $ f$ is Lipschitz with Lipschitz constant bounded by $L, $ uniformly within our function class, and the arguments and notations of the proof of Proposition \ref{prop:ANh} above, 
we have 
$$
| E_1^{N,i} | \le \frac{\|Q\|_{Lip} }{ F(x^* ) - \eps_N}  L  \int_{ - \infty}^\infty   \frac{|V^{*, i }_{ T}|}{ h_N \sqrt{N}}  \left( h_N |u| + \frac{|V^{*,i}_T|}{ \sqrt{N} } \right)   1_{\left\{|u|\leq K+\frac{|V_T^{*,i}|}{h_N \sqrt N}\right\}}    du .$$
Collecting all constants within a generic constant $C$ that depends only on the function class $ H ( \beta, l, L ), $ we may therefore upper bound the above expression by 
$$
| E_1^{N,i} | \le   \frac{C }{ F(x^* ) - \eps_N}  \left(    \frac{|V_T^{*,i}|}{ \sqrt{N}}   +\frac{|V_T^{*,i}|^2}{h_N N } + \frac{|V_T^{*,i}|^3}{N^{3/2}  h_N^2 } \right) .
$$

Similarly, in $ E_2^{N, i }, $ the domain of integration can be restricted to $ |u| \le  K ,$ and we obtain 
$$  | E_2^{N,i}| \le \frac{C }{ F(x^* ) - \eps_N}   \|f\|_{Lip}  \frac{|V_T^{*,i}|}{\sqrt{N} } \le  C L \frac { |V_T^{*,i}|} {\sqrt{N}} . $$
The two previous bounds imply that
 \begin{multline}\label{eq:impbias}
\tilde J_1 \leq \frac 1N\sum_{i=1}^N \left( |E_1^{N,i}| + |E_2^{N,i}|\right)\\
 \leq \frac{C }{ F(x^* ) - \eps_N}  \left( \frac 1{\sqrt N}+    \frac{1}{ {h_N N}  } +   \frac{1}{ h_N^{2} N^{3/2}} \right) \frac 1N\sum_{i=1}^N (1+|V_T^{*,i}|+|V_T^{*,i}|^3)  .
 \end{multline}

In particular, under the condition $h_N^{3/2}N\to\infty,$ since $ |V_T^{*,i}| $ has all moments (recall \eqref{eq:wasserstein0}), we deduce from \eqref{eq:impbias} together with \eqref{eq:tildeJ2} that
\begin{equation}\label{eqdrift1}
 \sqrt{Nh_N}\sup_{ f \in H ( \beta,l,  L)} \E_{x_0}^f | B_t^{N,  h_N}- B_t^{ h_N}| \to 0 . 
\end{equation}
Moreover, from \eqref {eq:impbias},
\begin{equation*}
( \tilde J_1) ^2  \leq  \frac{C }{ (F(x^* ) - \eps_N)^2}  \left( \frac 1{ N}+    \frac{1}{ {h_N^2 N^2}  } +   \frac{1}{ h_N^{4} N^{3}} \right) \frac 1N\sum_{i=1}^N [(1+|V_T^{*,i}|+|V_T^{*,i}|^3)]^2  ,
 \end{equation*}
such that, recalling \eqref{eq:upperboundtildej2}, using that  $h_N^{3/2}N\to\infty$ (which follows from $h_N^2 N \to \infty $ and the fact that $ h_N \to 0$),
\begin{equation}\label{eq:impbias22}
 Nh_N \sup_{ f \in H ( \beta, l, L)} \E_{x_0}^f | B_T^{N,  h_N}- B_T^{ h_N}|^2  \to 0 
 \end{equation}
 as $ N \to \infty. $ 
  
{\bf Step 2.}
We now investigate further the expression of $ B_T^{h_N} .$ Using the change of variables $ u = (x_s-x^*)/h_N, $ we have
$$ B_T^{h_N}  =  \int_{\frac{x_0-x^*}{h_N}}^{\frac{x_T- x^*}{h_N}}  Q (  u ) \frac{1}{F( x^* + h_N u)} [ f ( x^* + h_Nu ) - f( x^* ) ] du .$$

For $N$ large enough, we have  $supp Q  \subset ] \frac{x_0 - x^* }{ h_N}, \frac{x_T- x^* }{ h_N}[  ,$ 
such that the above integral equals 
$$ B_T^{h_N}  =  \int_{\R}  Q (  u ) \frac{1}{F( x^* + h_N u)} [ f ( x^* + h_Nu ) - f( x^* ) ] du $$
which does not depend on $T$ any more. 
We rewrite this expression as $B_T^{h_N}  = B^{h_N, 1}  + B^{h_N, 2}  , $ where 
$$B^{h_N, 1} =  \frac{1}{F( x^*) } \int_{\R } Q (  u )  [ f ( x^* + h_Nu ) - f( x^* ) ]  du$$
and 
$$ B^{h_N, 2 } =   \int_{\R } Q (  u ) [ \frac{1}{F( x^* + h_N u)}- \frac{1}{F( x^*) }]  [ f ( x^* + h_Nu ) - f( x^* ) ] du.$$ 
We start developing $B^{h_N, 1} .$ By assumption, 
$$  f ( x^* + h_Nu ) - f( x^* ) = \sum_{l=1}^k \frac{ f^{(l)} (x^*)}{ l!} h_N^l u^l + \frac{ f^{(k)} ( y ) - f^{(k) } (x^*) }{k!} (h_N u)^k, $$
where $ y = y(u)  \in ] x^* , x^* + h_N u [ $ if $ u > 0 $ and $y \in  ]  x^* + h_N u, x^*  [,$ if $ u < 0.$  By our assumptions on the kernel $Q,$ this implies that 
$$ B^{h_N, 1} =  \frac{1}{F( x^*) } \int_{\R } Q (  u )  \frac{ f^{(k)} ( y ) - f^{(k) } (x^*) }{k!} (h_N u )^k  du . $$

Since $ y \in [x_0, x_T],$ we have that $ | f^{(k)} ( y ) - f^{(k) } (x^*) | \le L | y - x^* |^\alpha \le L h_N^\alpha |u|^\alpha , $ which implies that 
\begin{equation}\label{eq:bth1} | B^{h_N, 1}|  \le L  h_N^{\beta } \frac{1}{F( x^*) } \int_{\R } |Q (  u )|  |u|^\beta du \le \frac{ L}{ l} h_N^\beta \int_{\R } |Q (  u ) | |u|^\beta du , 
\end{equation}
where we have used that $ F ( x^* ) \geq l $ and where we recall that $ \beta = k + \alpha.$ 
Thus we have just proved that 
$$ \sup_{ f \in H ( \beta,l,  L ) } | B_T^{h_N, 1}|  \le C h_N^\beta , $$
for all $ N $ sufficiently large. 

The control of $ B^{h_N, 2} $ is similar. The domain of integration for $B^{h_N, 2} $ can be restricted to $ |u| \le K $ such that for all such $ u, $ $F ( x^* + h_N u ) \geq F(x^* ) - ( \| b' \|_\infty + L ) K h_N $ (recall the argument leading to \eqref{eq:epsilonN}) which is strictly lower bounded for all $ N $ sufficiently large. Therefore $ F$ is uniformly lower bounded on the domain of integration. The derivatives of $ \frac{1}{F}$ up to order $k$ involve sums of derivatives of order $ j $ of $F,$ for all $ j \le k, $ hence of $f$ and of $b,$ divided by powers of $F.$ Since we suppose $b $ to be $ C^\infty ,$ to be known, and since we impose the uniform controls on the derivatives of $f$ on $ [x_0, x_T],$ it follows  that $  \frac{1}{ F}  $ belongs to a similar H\"older class as $f,$ of same regularity, with different constants.  Finally, developing both $[ \frac{1}{F( x^* + h_N u)}- \frac{1}{F( x^* ) }]  $ and $f ( x^* + h_Nu ) - f( x^* ),$ we then obtain that 
\begin{equation*}
 \sup_{ f \in H ( \beta, l, L ) }  | B_T^{h_N, 2}|   \le C   h_N^{\beta } 
\end{equation*}
as well, and finally
 \begin{equation}\label{eq:bth}  \sup_{ f \in H ( \beta,l,  L ) }  | B_T^{h_N}|   \le C   h_N^{\beta }  ,
\end{equation}
for all $ N $ sufficiently large. 

To conclude the proof, we write 
\begin{equation*}
 |B_T^{N, h_N}|\leq  |B_T^{N, h_N}-B_T^{ h_N}|+|B_T^{ h_N}| \leq  
  |B_T^{N, h_N}-B_T^{ h_N}|+Ch_N^\beta ,
 \end{equation*}
for all $N $ sufficiently large, and
 \begin{equation*}
 \sqrt{Nh_N}\sup_{ f \in H ( \beta,l, L)} \E_{x_0}^f |B_T^{N, h_N}|\leq   \sqrt{Nh_N}\sup_{ f \in H ( \beta, l, L)} \E_{x_0}^f |B_T^{N, h_N}-B_T^{ h_N}|+C\sqrt{Nh_N}h_N^\beta.  
 \end{equation*}
By \eqref{eqdrift1}, the first term of the above sum tends to zero supposing only that $h_N^{3/2}N\to\infty.$ To conclude the proof of item (i), we need to ensure that $ \sqrt{h_N N }  h_N^\beta \to 0$ as $ N \to \infty. $ This imposes the choice $ h_N = o ( N^{ - \frac{1}{ 2 \beta + 1 } } )  $ and concludes the proof of item (i).

To prove item (ii), we write, for $N$ sufficiently large, 
\begin{equation*}
 |B_T^{N, h_N}|^2\leq 2 |B_T^{N, h_N}-B_T^{ h_N}|^2+2|B_T^{ h_N}|^2 \leq  
  2|B_T^{N, h_N}-B_T^{ h_N}|^2+Ch_N^{2\beta } 
\end{equation*}
and 
 \begin{equation*}
 {Nh_N}\sup_{ f \in H ( \beta, l, L)} \E_{x_0}^f |B_T^{N, h_N}|^2\leq  {Nh_N}\sup_{ f \in H ( \beta,l,  L)} \E_{x_0}^f |B_T^{N, h_N}-B_T^{ h_N}|^2+C{N}h_N^{1+2\beta}.  
 \end{equation*}
Using \eqref{eq:impbias22}, the first term of the above sum tends to zero. Under the condition that $ h_N = {\mathcal O} ( N^{ - \frac{1}{ 2 \beta + 1 } } )  ,$ we then get
 $ \limsup_{ N \to \infty} { N }  h_N^{1+2\beta}  < \infty,$ which concludes the proof of item (ii). $\qed $

We now turn to the proof of Theorem \ref{theo:martingale}.

\subsection{Proof of Theorem \ref{theo:martingale}} 
Recall the definition of $  M_t^{N, h } $ in \eqref{eq:atn} and let $ W^{N, h_N} _t := \sqrt{N h_N} M_t^{N, h_N}.$ Fix $ t_0 < T  $ such that $ x^* \in ] x_0, x_{t_0 } [  $ (recall Assumption \ref{ass:2}). Instead of evaluating $ W^{N, h_N}$ only at time $T, $ we shall consider the whole process $ (W^{N, h_N}_s)_{ s \geq t_0}$ which is a martingale (for any fixed $N$), having its jumps bounded by $ C/ \sqrt{N h_N} $ (since $Q$ is bounded) which converges to $0.$ 

For any $ s \geq t_0,$ its predictable quadratic covariation process $<W^N>_s$ is given by 
\begin{multline}\label{eq:wwt}
<W^{N, h_N}>_s= \frac{h_N}{N} \int_{ [0, s ] \times \R } Q^2_{h_N} ( x - x^* ) \nu^N_J ( dv, dx) = \frac{h_N}{N} \sum_{i=1}^N \int_0^s   f ( X^{N, i }_v ) Q^2_{h_N} ( X^{N, i }_v - x^* ) dv\\
 =\frac{1}{N} \sum_{i=1}^N \int_0^s   f ( X^{N, i }_v )  \frac{1}{h_N}Q^2 ( \frac{X^{N, i }_v - x^* }{h_N} ) dv =:\frac{1}{N} \sum_{i=1}^N W^{N,i, h_N }_s.
\end{multline}
In what follows we show that $ <W^{N, h_N}>_s$ converges in probability to some deterministic limit. To prove this, as before, we study each single term $ W^{N,i, h_N}_s =h_N \int_0^s   f ( X^{N, i }_v ) Q^2_{h_N} ( X^{N, i }_v - x^* ) dv $ and compare it to its associated non random limit expression which is given by
$$W^{h_N}_s:= h_N \int_0^s   f ( x_v ) Q^2_{h_N} ( x_v - x^* ) dv =  \int_0^s  f ( x_v ) \frac{1}{h_N}Q^2 ( \frac{x_v - x^* }{h_N} ) dv .$$
As in the proof of Proposition \ref{prop:ANh}, localizing according to the fact whether $ |V_T^{*,i}| / (h_N\sqrt N ) \leq  K/ \sqrt h_N $ or not, we rewrite 
$$ \frac{1}{N} \sum_{i=1}^N ( W^{N,i, h_N }_s - W^{h_N}_s) =: \bar J_1 + \bar J_2 .$$
Since $f$ is bounded, applying the same arguments as those that yield \eqref{eq:J1}, we obtain 
$$ \sup_{ f \in H ( \beta, l , L )} \E_{x_0}^f  (\bar J_2)  \to 0 $$
as $N \to \infty .$  We now deal with 
$$ \bar J_1 =  \frac{1}{N} \sum_{i=1}^N ( W^{N,i, h_N }_s - W^{h_N}_s) 1_{\left
\{\frac{|V_T^{*,i}|}{h_N\sqrt N}\leq \frac{K}{\sqrt h_N}\right\}} .$$ 

Applying the change of variables $ h_N u = x_v - x^* ,$ we get  
$$ W^{N,i,h_N}_s =  \int_{\frac{x_0- x^*}{h_N}}^{\frac{x_s-x^*}{h_N}} f ( x^* + h_N u  + \frac{V^{N, i }_{\gamma ( x^* + h_N u )}  }{ \sqrt{N}} ) Q^2 (u  + \frac{V^{N, i }_{\gamma ( x^* + h_N u )}  }{ h_N\sqrt{N}}  ) \frac{1}{F ( x^* + h_N u )} du $$
and
\[
W^{h_N}_s =  \int_{\frac{x_0- x^*}{h_N}}^{\frac{x_s-x^*}{h_N}}    f ( x^*+h_N u)  Q^2 (u ) \frac{1}{F ( x^* + h_N u )}du .
\] \ec
Supposing w.l.o.g. that $ F $ is positive on $ ] x_0, x_t[$, we therefore obtain  
\begin{equation}\label{eq:errorm}   \bar J_1 \le \frac1N \sum_{i=1}^N ( E_1^{N,i} + E_2^{N,i}) ,
\end{equation} 
where 
\begin{multline*}
 E_1^{N,i} =1_{\left
\{\frac{|V_T^{*,i}|}{h_N\sqrt N}\leq \frac{K}{\sqrt h_N}\right\}}   \\
 \int_{\frac{x_0- x^*}{h_N}}^{\frac{x_s-x^*}{h_N}} f ( x^* + h_N u  + \frac{V^{N, i }_{\gamma ( x^* + h_N u )}  }{ \sqrt{N}} )\left| Q^2 (u  + \frac{V^{N, i }_{\gamma ( x^* + h_N u )}  }{ h_N\sqrt{N}}  ) - Q^2 (u    )\right| \frac{1}{F ( x^* + h_N u )} du 
\end{multline*}
and 
$$ E_2^{N,i} = \int_{\frac{x_0- x^*}{h_N}}^{\frac{x_s-x^*}{h_N}} Q^2 (u) \left|f ( x^* + h_N u  + \frac{V^{N, i }_{\gamma ( x^* + h_N u )}  }{ \sqrt{N}} ) - f (x^* + h_N u )  \right| \frac{1}{F ( x^* + h_N u )} du 1_{\left
\{\frac{|V_T^{*,i}|}{h_N\sqrt N}\leq \frac{K}{\sqrt h_N}\right\}} . $$
Due to the compact support of $Q,$ the domain of integration within the integral defining $ E_1^{N, i } $  can be restricted to values of $u$ such that $|u| \le K+\Big|\frac{V^{N, i }_{ \gamma (x^* + h_N u  ) }}{ h_N \sqrt{N}}\Big| .$
Using the arguments and notations of the proof of Proposition \ref{prop:ANh} above, and recalling in particular \eqref{eq:epsilonN}, exploiting the Lipschitz continuity of $ Q^2, $ we obtain 
\begin{multline*}
 E_1^{N,i} \le \frac{L \| Q^2 \|_{Lip} }{ F( x^* ) - \eps_N} \int_{-\infty}^\infty  1_{ \left\{  |u| \le K+\frac{|V^{N, i }_{ \gamma (x^* + h_N u  ) }|}{ h_N \sqrt{N}}  \right\} }   \frac{V^{N, i }_{\gamma ( x^* + h_N u )}  }{ h_N\sqrt{N}}  du 1_{\left
\{\frac{|V_T^{*,i}|}{h_N\sqrt N}\leq \frac{K}{\sqrt h_N}\right\}}   \\
   \le C\frac{|V_T^{*,i}|}{h_N \sqrt{N} }  ( 1 + \frac{|V_T^{*,i}|}{h_N \sqrt{N} }) ,
\end{multline*}  
where the last inequality holds for all $ N $ sufficiently large such that $ F ( x^* ) - \eps_N $ is uniformly lowerbounded. Therefore, 
\begin{equation}\label{eq:e1bound} \sup_{ f \in H ( \beta,l, L) } \E_{x_0}^f  E^{N, i }_1 \le C \sup_{ f \in H ( \beta, l,  L) } \E_{x_0}^f  \left (  \frac{|V_T^{*,i}|}{h_N \sqrt{N} } + \frac{  |V_T^{*,i}|^2 }{h_N^2 N }\right) \to 0 
\end{equation}
as $ N \to \infty , $ since 
$ h_N^2 N \to \infty $ by assumption.

Finally, it is easily seen that 
$$E_2^{N,i} \le \frac{L}{F(x^* ) - \eps_N } \frac{|V_T^{*,i}|}{ \sqrt{N} } \int_\R Q^2 (u) du  ,$$ 
due to the Lipschitz continuity of $f$ with Lipschitz constant upper bounded by $L,$ such that 
$$ \lim_{ N \to \infty } \sup_{ f \in H ( \beta,l,  L) } \E_{x_0}^f  E^{N, i }_2 = 0 $$
as well. This implies that 
$$ \lim_{ N \to \infty } \sup_{ f \in H ( \beta,l, L) } \E_{x_0}^f \left( | <W^{N, h_N} >_s - \int_0^s h_N  f( x_v ) Q_{h_N}^2 ( x_v- x^*  ) dv | \right) = 0,$$
for all $ s \geq t_0, $ 
such that the angle bracket process of  $(W^{N, h_N}_s)_{ s \geq t_0} $ converges in probability to 
\begin{multline*}
\lim_{N \to \infty } \int_0^s f(x_v)\frac{1}{h_N}  Q^2 ( \frac{x_v- x^*}{h_N}) dv
  = \lim_{N \to \infty } \int_{\frac{x_0-x^*}{h_N}}^{\frac{x_s- x^*}{h_N}} f ( x^* + h_N u ) Q^2 ( u)  \frac{1}{F ( x^* + h_N u ) }  du \\
 = \frac{f(x^*)}{|F(x^*)|} \int_{ - \infty }^{\infty} Q^2 (u) du.
 \end{multline*}
To obtain the last line above we have used the \bdc{same arguments as those in the proof of Proposition \ref{prop:ANh}.} \ec 

Therefore,  the condition $[\gamma'_5-D]$ of  the martingale convergence theorem Theorem VIII.3.11 of \cite{JSh} is satisfied and implies the weak convergence, in Skorokhod space, of $( W^{N,h_N}_s)_{ s \geq t_0 } $ to a continuous Gaussian limit martingale $ (W_s)_{s \geq t_0 } $ which has constant quadratic variation $ C_s := C:=  \frac{f(x^*)}{|F(x^*)|} \int_{ - \infty }^{\infty} Q^2 (u) du $ for all $ s \geq t_0.$ In other words, \bdc{almost surely, }\ec $ W_s = W_{t_0} $ for all $ s \geq t_0, $ where $ W_{t_0} \sim {\mathcal N} ( 0, C).$ In particular, taking $s= T,$ this implies the first statement of the theorem. The second statement follows immediately since
$$ Nh_N\E_{x_0}^f [(M_T^{N,h_N})^2] =\E_{x_0}^f [  <W^{N, h_N}>_T]$$  
and since all results that have been obtained are uniform with respect to the function class $ H ( \beta,l, L ) .$
$\qed$

\subsection{Proof of Theorem \ref{theo:main}, item b) } This statement now follows from decomposition \eqref{eq:biais-variance},  Theorem \ref{theo:martingale}, Theorem \ref {theo:biais} and  Proposition \ref {prop:ANh} $(iv)$. $\qed$

\subsection{Proof of Theorem \ref{theo:main}, item a) } 

Remember that
$$\Omega_T^{N, h_N, \eps}:=\{|\frac {A_T^{N,h_N}}{A_T^{h_N}}-1| \le \eps\}.$$ 
We have already proved in item (iii) of Proposition \ref{prop:ANh} that $ \Pr_{x_0}^f ( (\Omega_T^{N, h_N, \eps})^c ) \to 0$ as $ N \to \infty, $ uniformly in $f \in H( \beta, l , L).$ 
Using the decomposition \eqref{eq:biais-variance}, we have that
\begin{multline}
\E \left[|\hat f_T^{N,h}(x^*)-f(x^*)|^2\indiq_{\Omega_t^{N, h_N, \eps}}\right]\leq 
\E\left[\left(\frac {M_T^{N,h_N}}{A_T^{N,h_N}}\right)^2\indiq_{\Omega_T^{N,h_N,\eps}}\right]+\E\left[\left(\frac{B_T^{N,h_N}}{A_T^{N,h_N}}\right)^2\indiq_{\Omega_T^{N, h_N, \eps}} \right]\leq\\
(1 -  \eps)^{-2}\left\{(A_T^{h_N})^{-2}\E\left[\left({M_T^{N,h_N}}{}\right)^2\right]+(A_T^{h_N})^{-2}\E\left[\left({B_T^{N,h_N}}{}\right)^2 \right]\right\}
.
\end{multline}
Using Proposition \ref{prop:ANh} item (ii), $A_T^{h_N}$ converges to $ 1/ |F ( x^* )| $ which is finite since $ F (x^* ) \neq 0.$ 
Using moreover Theorems \ref{theo:martingale} and \ref{theo:biais}, we have
$$ \limsup_{N \to \infty}   \sup_{f \in H( \beta, l, L)} N h_N \E_{x_0}^f [| M_T^{N, h_N}|^2] < \infty.  $$
Finally,  if $ h_N =  {\mathcal O} ( N^{ - \frac{1}{ 2 \beta + 1 } }) ,$ then 
$$ \limsup_{N \to \infty}   \sup_{f \in H( \beta,l, L)}{N h_N } \E_{x_0}^f | B_T^{N, h_N} |^2 < \infty .$$ 
This concludes the proof. $\qed$

\section{Simulations}\label{sec:simu}

The aim of this section is to simulate system \eqref{eq:finitesystem} and to compute the estimator \eqref{eq:kernelestimator}
for some points $x^*\in \R$. 
We will consider systems with $N=20000$ neurons and we will consider $h=h_N=N^{-0.49} \approx 0.0078$. With this choice of $h_N$, we have $h_N^2 N \to \infty$ as $N\to \infty$, as assumed in Theorem \ref{theo:main}. \bdc{Each system is simulated $10$ times, and for each simulation} \ec the initial membrane potential of the neurons will be the same. We simulate until time $T=10, $ and this is the time at which the estimator $\hat{f}^{N, h }_T$ is computed.

We consider the uniform rectangular kernel 
$$
Q(u)=\frac{1}{2}\mathbf{1}_{\{-1\leq u \leq 1\}},
$$
for any $u\in \R$.
Even though this kernel does not satisfy our assumptions, with this kernel the denominator of \eqref{eq:kernelestimator} has a simple solution: it is exactly the occupation time of  $]x^* - h_N, x^* + h_N [$ during $[0, T ]$.

\bdc{In our examples, between jumps, the jump rate decreases over time. Therefore,  we can use an acceptance-rejection procedure, informally described by Algorithm \ref{algorithm}.} \ec

\begin{algorithm}[ht]
\caption{Simulation of system with an acceptance-rejection procedure}
\label{algorithm}
\begin{algorithmic}[1]
\bdc{
\State Inputs: $f,b,\nu,x_0,N$ and $T$.
\State Initialize the vector $X_0^{N,i} \leftarrow x_0$, for all $i \in I$.
\State Set $t \leftarrow  0$ (the simulation time).

\While{$t < T$}

        \State Draw $E$ (the candidate jump time) with exponential distribution having parameter
        \(
        \sum_{i \in I} f(X_t^{N,i}).
        \)
        
        \State Draw $A$ (accept the jump) with Bernoulli distribution with probability of success
        \[
        \frac{\sum_{i \in I} f (X_t^{N,i}+\int_{t}^{t+E} b ( X_s^{N, i } ) ds)}{\sum_{i \in I} f(X_t^{N,i})}.
        \]

        \If{$A=0$}
       \State Set
        \[
        X_{t+r}^{N, i} =  X_{t}^{N, i }+\int_{t}^{t+r} b ( X_s^{N, i } ) ds, \quad \text{ for } r\in (0,E] \text{ and } i\in I.
        \]

        \Else

            \State Choose $j$ (the index of the spiking neuron) with probability
    \[
    \frac{ f\left(X_t^{N,j}+\int_{t}^{t+E} b ( X_s^{N, j } ) ds\right)}{\sum_{i \in I} f (X_t^{N,i}+\int_{t}^{t+E} b ( X_s^{N, i } ) ds)}, \quad \quad  \text{ for} \quad  j \in I.
    \]

    \State Draw $U \sim\nu$ (the height of the jump).

     \State Set
        \[
        X_{t+r}^{N, i} =  X_{t}^{N, i }+\int_{t}^{t+r} b ( X_s^{N, i } ) ds+U \times  \mathbf{1}_{\{i\neq j, r=E\}}, \quad \text{ for } r\in (0,E] \text{ and } i \in I.
        \]

        \EndIf

        \State Set $t\leftarrow t + E$.
        
\EndWhile
}\ec
\end{algorithmic}
\end{algorithm}

Let us first consider the system with jump rate function 
$$
f(x)=2-e^{-x^2}
$$
and drift function 
$$
b(x)=-x.
$$
Note that with the combination of a drift function pushing the membrane potential to $0$ and a jump rate $f(x)$ which is an increasing function of $|x|$, we can simulate the system using \bdc{Algorithm \ref{algorithm}}\ec. We use the fact that the system follows an exponential flow between jumps to obtain occupation times and compute the integral in the denominator of \eqref{eq:kernelestimator}.

We consider $\nu \sim Uniform(-2,3)$ such that $w=0.5$. Note that the membrane potential assume values in $\R$.
The unique solution of
$$
F(x)=b(x)+wf(x)=0
$$
is $x\approx 0.6889$.
The initial membrane potential of the system is set to $x_0=-1$. The results of the simulations are summarized in Figure \ref{fig:simulation1}. These results indicate that the \bdc{Mean Squared Error} \ec decreases as $x^*, $ the position where we want to estimate,  increases and gets closer to $0.6889$. This is consistent with item b) of Theorem \ref{theo:main}, since the estimation variance is a factor of $F (x^*) $ and thus decreases as $ x^* $ approaches the equilibrium of the limit flow.

\begin{figure}[!htb] 
\center{\includegraphics[width=0.7\textwidth]
         {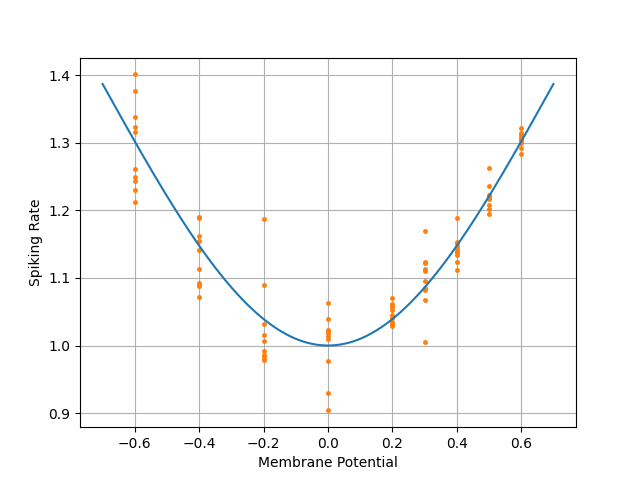}}
\caption{The blue curve in the figure is the spiking rate function $f$. The orange points in the figure are the values of the estimator calculated during  \bdc{each one of the $10$ simulations}\ec. The points in which the estimator was calculated are
$-0.6,-0.4, -0.2, 0, 0.2, 0.3, 0.4, 0.5$ and $0.6$. The successive \bdc{Mean Squared Error} \ec for these points are approximately the following, respectively: 
$0.0038, 0.0021, 0.0041, 0.0022, 0.0003, 0.0018, 0.0004, 0.0003,$ and $0.0001$.
}
\label{fig:simulation1}
\end{figure}

\subsection{Partial observation of the system}

Let us now consider the estimator based on partial observations of the system given by \eqref{eq:kernelestimator1small}. Recall that we assume that we have access to the trajectories of $\gamma_N$ neurons, where $ \gamma_N \in \N$ such that $ \gamma_N  \to \infty $ as $ N \to \infty, $ $ \gamma_N \le N.$ Considering a set $\mathcal{S}\subset \{1,\ldots,N\}$ such that $|\mathcal{S}|=\gamma_N$, the estimator then reads as 
\begin{equation*}
\hat f_T^{N, \gamma_N, h } ( x^*) = \frac{ \sum_{i\in \mathcal{S}  } \int_{[0,T]\times \R_+}   Q_h ( X_{s-}^{N, i }-x^*) 1_{\{ z \le f ( X_{s-}^{N, i } ) \}}  \bar \pi^{i} (ds, dz)}{ \int_0^t \sum_{i\in \mathcal{S}}  Q_h ( X_s^{N, i} - x^*) ds }.
\end{equation*}

Considering the same system and the same simulations as above, let us compute the estimator given by \eqref{eq:kernelestimator1small} successively for
$$
\gamma_N\in\left\{\frac{N}{2}, \frac{N}{4}, \frac{N}{20}, \frac{N}{200}\right\},
$$
i.e, $\gamma_N\in \{10000, 5000, 1000, 100\}.$
We keep $h_N=N^{-0.49}$ such that $h_N^2 \gamma_N\to\infty$, as $N\to\infty$. We consider the evolution of the neurons from $1$ to $10000$ to compute the estimator with $\gamma_N=10000$, we consider the evolution of the neurons from $10.001$ to $15.000$ to compute the estimator with $\gamma_N=5000$, and so on such that no neuron is used in the computation of two or more partial observation estimators. The results of the simulations are summarized in Figure \ref{fig:simulation2} \bdc{and Table \ref{table:MSE}} \ec. These results illustrate that the partial observation estimator gives estimates with a greater variance compared to the estimator \eqref{eq:kernelestimator}. While the cases $\gamma_N=10000$ and $\gamma_N=5000$  give a \bdc{Mean Squared Error (MSE) comparable} \ec to the one obtained by \eqref{eq:kernelestimator}, the case $\gamma_N=1000$ gives a good \bdc{MSE} \ec only for points sufficiently close to $0.6889$ and the case $\gamma_N=100$ gives a \bdc{reasonable MSE} \ec only for the point $0.6$.
These results show again that the estimation error decreases as the estimated point gets closer to the equilibrium point $0.6889$. 

\begin{figure}[!htb] 
\centering
\begin{subfigure}[b]{0.49\textwidth}
    \centering
    \includegraphics[width=\textwidth]{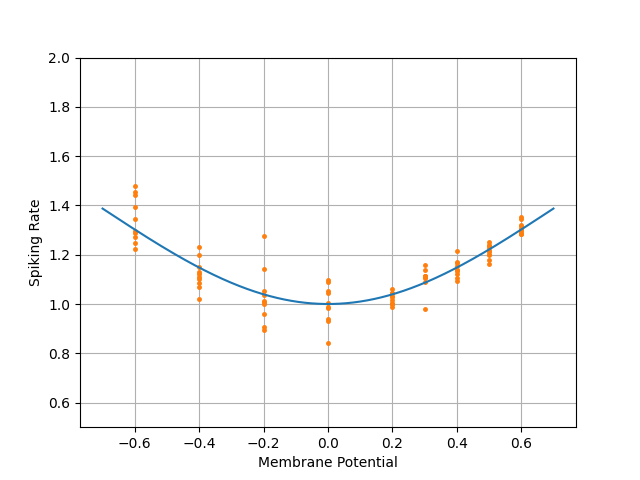}
    \caption{Partial observation estimator with $\gamma_N=10000$.}
\end{subfigure}
\begin{subfigure}[b]{0.49\textwidth}
    \centering
    \includegraphics[width=\textwidth]{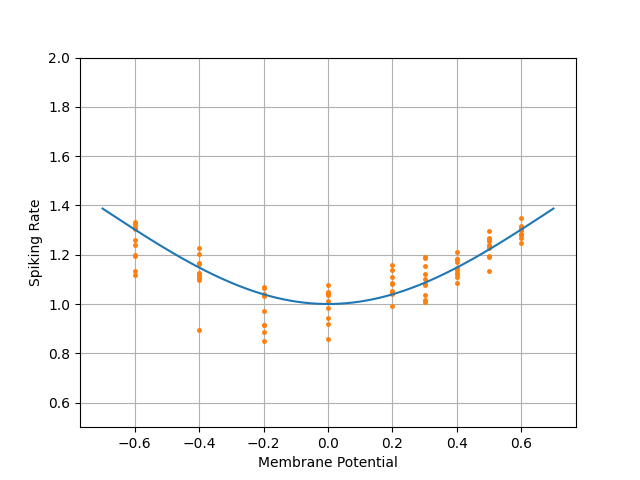}
    \caption{Partial observation estimator with $\gamma_N=5000$.}
\end{subfigure}
\begin{subfigure}[b]{0.49\textwidth}
    \centering
    \includegraphics[width=\textwidth]{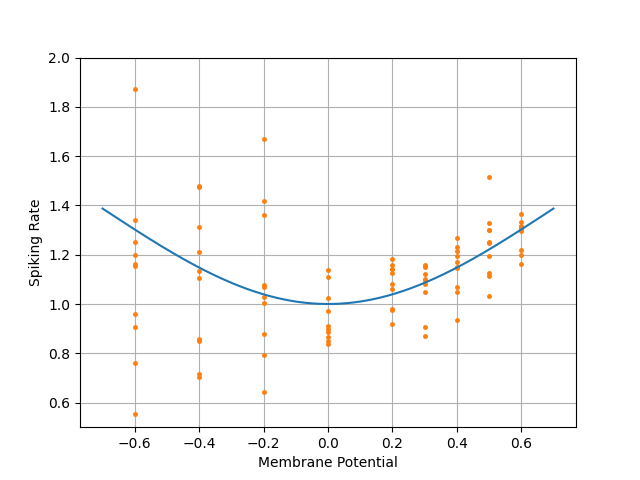}
    \caption{Partial observation estimator with $\gamma_N=1000$.}
\end{subfigure}
\begin{subfigure}[b]{0.49\textwidth}
    \centering
    \includegraphics[width=\textwidth]{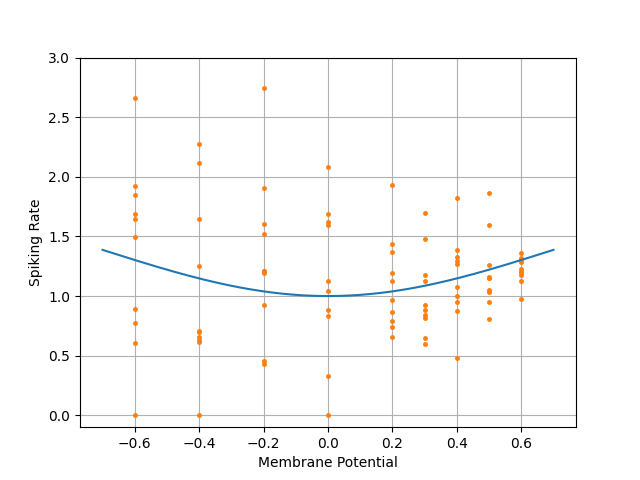}
    \caption{Partial observation estimator with $\gamma_N=100$.}
\end{subfigure}
\caption{\bdc{The blue curve in each sub-figure is the spiking rate function $f$. The orange points in each sub-figure are the values of the partial observation  estimator calculated during  each one of the $10$ simulations}\ec. The points in which the estimator was calculated are
$-0.6,-0.4, -0.2, -0, 0.2, 0.3, 0.4, 0.5$ and $0.6$. \bdc{The Mean Squared Errors associated to these simulations are presented in Table \ref{table:MSE}.}\ec}
\label{fig:simulation2}
\end{figure}

\begin{table}[!htb]
\bdc{
\begin{tabular}{|ll|l|l|l|l|l|l|l|l|l|}
\hline
\multicolumn{2}{|l|}{Points}                    & -0.6 & -0.4 &  -0.2 &  0 &  0.2 &  0.3 & 0.4 & 0.5 &  0.6 \\ \hline
\multicolumn{1}{|l|}{\multirow{5}{*}{MSE}}   & All neurons & 0.0038 & 0.0021 & 0.0041 & 0.0022 & 0.0003 & 0.0018 & 0.0004 & 0.0003 & 0.0001\\ \cline{2-11} 
\multicolumn{1}{|l|}{}                       & $\gamma_N=10000$ &  0.0095 & 0.0041 & 0.0115 & 0.0056 & 0.0008 & 0.0023 & 0.0011 & 0.0008 & 0.0006 \\ \cline{2-11} 
\multicolumn{1}{|l|}{}                       & $\gamma_N=5000$ & 0.0093 & 0.0081 & 0.0095 & 0.0043 & 0.0035 & 0.0040 & 0.0014 & 0.0021 & 0.0008 \\ \cline{2-11} 
\multicolumn{1}{|l|}{}                       & $\gamma_N=1000$ & 0.1509 & 0.0809 & 0.0889 & 0.0131 & 0.0087 & 0.0095 & 0.0090 & 0.0169 & 0.0042  \\ \cline{2-11} 
\multicolumn{1}{|l|}{}                       & $\gamma_N=100$ & 0.5460 & 0.4978 & 0.5024 & 0.3888 & 0.1422 & 0.1169 & 0.1173 & 0.0878 & 0.0183   \\ \hline
\end{tabular}
} \ec
\caption{Mean Square Errors (MSE) of the simulations associated with Figure \ref{fig:simulation2}.}
\label{table:MSE}
\end{table}

\subsection{Systems with a finite number of spikes}

Let us consider the system \eqref{eq:finitesystem} in the purely excitatory regime when all membrane potentials are non-negative and $ \nu $ is supported by $ \R_+ ,$ with jump rate function 
$$
f(x)=\log(1+x)
$$
and drift function 
$$
b(x)=-x.
$$
In this situation, we can still simulate the system using an acceptance-rejection algorithm. This system is similar to the one studied by \cite{do} and \cite{Touboul}. The only difference is that in the system considered in these articles, each neuron resets its membrane potential to $0$ at each spiking time. Moreover, \cite{do} considers the case $\nu=\delta_h,$ for $h>0$. These articles prove that if $ \int u  \nu ( du) <\infty,$ then the system stops spiking after a finite amount of time almost surely. In the case $\nu=\delta_h,$ if $h$ is sufficiently small, the system quickly stops spiking. However, for sufficiently high values of the synaptic weight $h$,  \cite{lochandmonm} proves that the system exhibits a metastable behavior, meaning that the time at which the system stops spiking is a large and unpredictable random time.

Although our system does not include the reset to $0$ of the spiking neuron, also in the present situation the following result holds true. 

\begin{proposition}
For the system \eqref{eq:finitesystem} with $ f (x) = \log ( 1 + x ), b (x) = - x $ and $ \nu ( \R_-) = 0, $ we have that 
$$ \Pr_{x_0}^f ( \sum_{k=1}^\infty 1_{ \{ T_k < \infty \}} < \infty ) = 1.$$
\end{proposition}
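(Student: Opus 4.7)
The plan is to establish the stronger integrability $\E_{x_0}^f[N_\infty] < \infty$, where $N_\infty := \sum_{k \ge 1} \indiq_{\{T_k < \infty\}}$, by exploiting the sub-linearity $\log(1+x) \le x$ together with the dissipative drift $b(x) = -x$. Let $S_t := \sum_{i=1}^N X_t^{N,i}$. Since $S_t$ evolves according to $dS_t = -S_t\,dt$ between spikes while, at a spike time $T_k$, $S_t$ jumps exactly by $\frac{N-1}{N} U_k$ (neuron $I_k$'s own potential being unchanged by its own spike), integrating the successive exponential decays yields the pathwise identity
\begin{equation*}
\int_0^\infty S_s\,ds = S_0 + \frac{N-1}{N}\sum_{k=1}^{N_\infty} U_k.
\end{equation*}

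I would then take expectations, use the pointwise bound $\Lambda_t := \sum_i \log(1+X_t^{N,i}) \le S_t$ combined with $N_t - \int_0^t \Lambda_s\,ds$ being a martingale, and invoke the Wald-type identity $\E[\sum_{k=1}^{N_\infty} U_k] = w\,\E[N_\infty]$ (justified by monotone convergence since $U_k \ge 0$ and $w < \infty$) to obtain
\begin{equation*}
\E[N_\infty] \le \E[S_0] + \frac{(N-1)w}{N}\,\E[N_\infty].
\end{equation*}
Whenever $(N-1)w/N < 1$, rearranging gives $\E[N_\infty] \le \E[S_0]/(1 - (N-1)w/N) < \infty$, which readily implies $N_\infty < \infty$ almost surely.

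The hard part is the metastable regime $(N-1)w/N \ge 1$, where the above inequality is vacuous. There I would use that, conditionally on $\mathcal{F}_{T_n}$, the probability of no further spike after $T_n$ equals $e^{-\Phi_n}$ with $\Phi_n := \sum_j \int_0^{X_{T_n+}^{N,j}} \log(1+u)/u\,du \le S_{T_n+}$; iterating the one-step conditional probabilities yields $\Pr(N_\infty = \infty) = \E[\prod_n (1 - e^{-\Phi_n})] \le \E[\exp(-\sum_n e^{-\Phi_n})]$, so it suffices to show $\liminf_n S_{T_n+} < \infty$ almost surely on $\{N_\infty = \infty\}$. For this I would apply a Foster-Lyapunov argument to the embedded chain $M_n := S_{T_n+}$: stochastic domination of the inter-spike interval $T_{n+1}-T_n$ by an exponential variable with parameter $N\log(1+M_{n-1}/N)$ (itself obtained via Jensen's inequality applied to $\log(1+\cdot)$) produces a strongly negative drift at infinity, since $x/\log(1+x/N)\to\infty$. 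Making this Foster-Lyapunov estimate uniform in the individual configuration of the neurons is the main technical obstacle.
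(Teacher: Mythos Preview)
Your argument for the regime $(N-1)w/N < 1$ is correct and in fact gives the stronger statement $\E_{x_0}^f[N_\infty]<\infty$; the only cosmetic fix is to derive the inequality at finite $t$ first (using $S_t\ge 0$) and then let $t\to\infty$ by monotone convergence, rather than invoking the $t=\infty$ pathwise identity directly.

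For $(N-1)w/N\ge 1$ your plan is sound in outline but is left as a sketch. Two remarks. First, the stochastic comparison should read: the inter-spike time \emph{dominates} an $\mathrm{Exp}\bigl(N\log(1+M_n/N)\bigr)$ variable, since the hazard $\Lambda_s$ is bounded above by its initial value, which by Jensen is at most $N\log(1+M_n/N)$. Second, the ``main technical obstacle'' you flag is less serious than you suggest: the one-step bound $\E[M_{n+1}\mid\mathcal F_{T_n+}]\le M_n\,\lambda/(1+\lambda)+\tfrac{(N-1)w}{N}$ with $\lambda\le N\log(1+M_n/N)$ already depends on the configuration only through $M_n$, so uniformity comes for free. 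What \emph{does} require care is to run Foster--Lyapunov on the chain with an absorbing state appended (set $M_{n+1}=0$ on $\{T_{n+1}=\infty\}$) so that no conditioning on survival enters; recurrence of $[0,R]$ for this absorbed chain then yields $\liminf M_n\le R$ on $\{N_\infty=\infty\}$, which is what you need for the Borel--Cantelli step.

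The paper takes a shorter route that treats all $w$ at once. It verifies (i) that from any configuration with $\max_ix^i\le r$ the probability of no further jump is at least $\exp\bigl(-N\int_0^r\log(1+u)/u\,du\bigr)>0$, and (ii) that $V(x)=\sum_ix^i$ is a Lyapunov function for the \emph{continuous-time} generator $A^N$ (using $\log(1+x)\le\delta x+C_\delta$ for small $\delta$, rather than just $\log(1+x)\le x$, which is why no restriction on $w$ appears). It then invokes Theorem~2.3 of \cite{do} directly. Compared to your plan this avoids the embedded chain, the case split, and the conditioning issue altogether; what your approach buys in the sub-critical regime is the explicit moment bound $\E[N_\infty]<\infty$, which the paper does not extract.
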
 

\begin{proof}
Fix some $ u  = (u^1, \ldots, u^N ) \in \R_+^N $ and write $ \Pr_u^f $ for the probability under which $ X^{N, i }_0 = u^i $ for each $ 1 \le i \le N.$
Following the arguments of Proposition 3.1 of \cite{do}, 
for any $r>0$ and any $u\in \R_+^N$ such that $u^i<r$, $1\leq i\leq N$, it holds, for all $0<t<T_1$, that
$ X^{N,i}_t  \leq re^{-t}$. Therefore
\[
\Pr_u^f(T_1>t)\geq \exp\Big\{-N\int^t_0 \log(1+re^{-s})ds\Big\}=\exp\Big\{-N\int^{r}_{re^{-t}}\frac{\log(1+u)}{u}du\Big\}.
\]
Taking $t$ to infinity, for any $r>0$, we have
\[
\inf_{u:u^i \leq r}\Pr_u^f(T_1=\infty) \geq \exp\Big\{-N\int^{r}_{0}\frac{\log(1+u)}{u}du\Big\}>0.
\]

Moreover, the function $ V (x) = \sum_{i=1}^N |x^i | $ is a Lyapunov function of the finite system, that is, there exists a constant $ C> 0 $ such that for all $ x \in \R_+^N , $ 
$$ A^N V (x) \le \frac12 V (x)  + C ,$$
where 
$$ A^N V ( x) = \sum_{ i= 1 }^N (- x^i ) \frac{ \partial V}{\partial x^i } (x) + \sum_{i=1}^N f ( x^i ) \int_{\R_+}[V ( x + \frac{u}{N} {\bf 1} - \frac{u}{N} e_i ) - V ( x)]  \nu ( du ) $$
is the generator of \eqref{eq:finitesystem}. 
Here $ \bf 1 $ the all $1$ vector in $ \R^N $ and $e_i$ the $i-$th unit vector.  

Then the proof of Theorem 2.3 in \cite{do} can be applied to our situation, and the result follows. 
\end{proof}

\begin{remark}
Presumably, the strategy of \cite{lochandmonm} could be adapted to the present framework to prove that for large values of $ w, $ metastability holds. In this case, the limit system possesses (at least) two equilibrium points, one strictly positive representing some sustained activity of the system, the other one being equal to $0, $ representing the silent system. To prove this metastability result is outside the scope of the present article. 
\end{remark}
\ec
In the following simulations, \bdc{we use 
$$
f(x)=\log(1+x)
$$
and 
$$
b(x)=-x.
$$
}\ec

We first set  $\nu \sim Uniform(0,4)$ and therefore, $w=2$. This system displays the metastable behavior described by \cite{lochandmonm}. 
The two solutions of
$$
F(x)=b(x)+wf(x)=0
$$
are $x=0$ and $x\approx 2.5129$. The initial membrane potential of the system is $x_0=0.1$. The results of the simulation are summarized in Figure \ref{fig:simulation3}. 

\begin{figure}[!htb] 
\center{\includegraphics[width=0.7\textwidth]
         {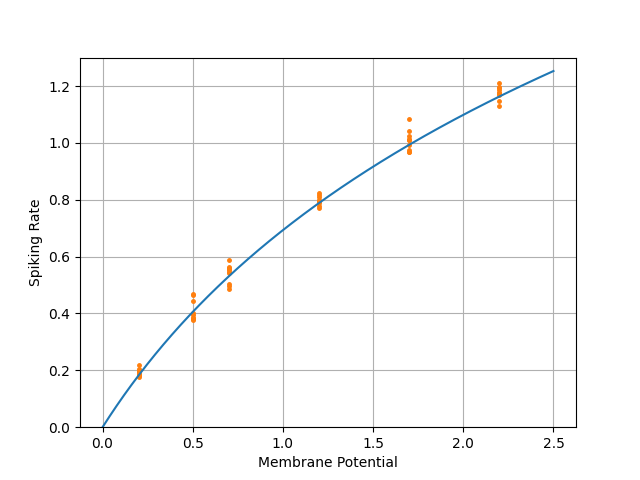}}
\caption{The blue curve in the figure is the spiking rate function $f$. The orange points in the figure are the values of the estimator calculated during \bdc{each one of the $10$ simulations}\ec. The points in which the estimator was calculated are $0.2, 0.5, 0.7, 1.2, 1.7$ and $2.2$. The successive \bdc{Mean Squared Error} \ec for these points are approximately the following: \bdc{$0.0003 , 0.0012 , 0.0011 , 0.0003 , 0.0014$ and $0.0007$}\ec.
}
\label{fig:simulation3}
\end{figure}

Now, let us consider a new system with the same jump rate function
$
f(x)=\log(1+x),
$
the same drift function 
$
b(x)=-x,
$
but with a new probability measure $\nu$, given by $\nu \sim Uniform(0,1)$. Note that $w=0.5$. In this system, the membrane potential of neurons quickly approaches zero, leading to extinction of the system as described by \cite{do}. 
The unique non-negative solution of
$$
b(x)+wf(x)=0
$$
is $x=0$. The initial membrane potential of the system is $x_0=1$.
The results of the simulation are summarized in Figure \ref{fig:simulation4}. 

The results displayed in Figures \ref{fig:simulation3} and \ref{fig:simulation4} show that the estimator \eqref{eq:kernelestimator} gives good estimates of the jump rate function even in the case in which we consider systems that stop spiking almost surely.

\begin{figure}[!htb] 
\center{\includegraphics[width=0.7\textwidth]
         {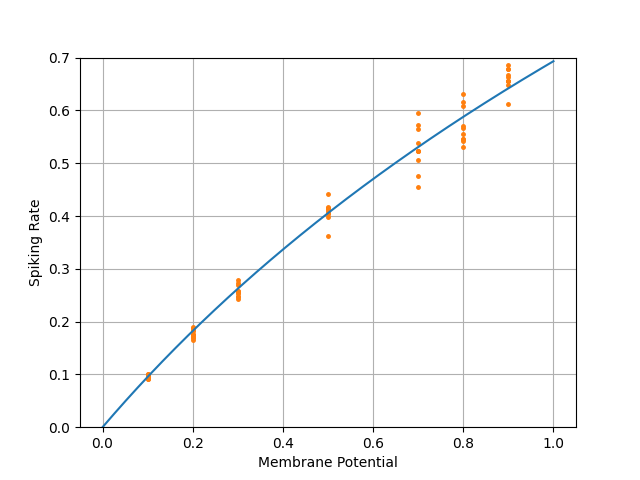}}
\caption{The blue curve in the figure is the spiking rate function $f$. The orange points in the figure are the values of the estimator calculated during \bdc{each one of the $10$ simulations} \ec. The points in which the estimator was calculated are $0.1, 0.2, 0.3, 0.5, 0.7, 0.8,$ and $0.9$. The successive \bdc{Mean Squared Error} \ec for these points are approximately the following: 
\bdc{$0.00001, 0.00009, 0.00015, 0.00035, 0.00169, 0.00135,$ and $0.00071$}\ec.
}
\label{fig:simulation4}
\end{figure}

\bdc{
\begin{remark}[Some remarks on systems that stop spiking almost surely]
The simulation of Figure \ref{fig:simulation4} shows a system that stops spiking almost surely after a finite time and that has been initially perturbed such that we observe it out of its equilibrium state (the silent state). We interpret this initial perturbation as the influence of some external stimulus to which the system has been exposed. The influence of the stimulus induces a high value of the initial potential of the neurons. 
Then this stimulus is switched off, and we observe the system during some time. If the initial stimulus is sufficiently large, then the influence of this stimulus survives within the system during some macroscopic time such that we are able to perform our estimation procedure.  
\end{remark}

}\ec

\section{Appendix}

\bdc 
\subsection{Proof of Theorem \ref{theo:strongapprox}}
Recall that $\tilde \pi^i  ( dt, dz, du ) = \pi^i ( dt, dz, du ) - dt dz \nu ( du) ,$  $ 1 \le i \le N, $ denote the compensated Poisson random measures. Recall also that $ w = \int u \nu (du)  .$ 

Clearly, for any $ 1 \le i \le N, $ 
\begin{equation}\label{eq:Pt0}
X^{N, i }_t = x_0 + \int_0^t b( X_s^{N, i } ) ds + \frac{w}{N} \sum_{ j=1}^N\int_0^t f ( X_s^{N, j } ) ds 
+ \frac 1{\sqrt N}L_t^{N} 
-E^{N,i}_t ,
\end{equation}
where
$$E^{N,i}_t :=\frac 1N\int_{[0, t]}\int_{\R_+} \int_{\R} u 1_{ z \le f ( X_{s-}^{N, i } ) }  \pi^i (dt, dz, du )$$
and
$$L_t^{N}:=\frac1{\sqrt N }\sum_{ j = 1  }^N \int_{[0, t]}\int_{\R_+} \int_{\R} u 1_{ z \le f ( X_{s-}^{N, j } ) }\tilde  \pi^j (dt, dz, du ) .$$
Recall that $F=b+wf$ and that $ x_t = x_0 + \int_0^t F( x_s) ds.$ 
Therefore
\begin{equation}
X_t^{N, i } - x_t =\int_0^t (F( X_s^{N, i } )-F(x_s)) ds+G^N_t+\frac 1{\sqrt N}L^N_t+E^{N,i}_t,
\end{equation}
where 
$$G^N_t=\frac wN\sum_{j=1}^N\int_0^t (f(X_s^{N,j})-f(X_s^{N,i}))ds.$$

Using the Gronwall lemma pathwise (that is, for any fixed $ \omega$), we obtain for any $ t \le T,$ 
\begin{equation}\label{eq:bV}
 \sup_{t\leq T} | X_t^{N, i } - x_t |  \le e^{  \|F\|_{Lip} T} \frac 1{\sqrt N}\left[ \sqrt N \sup_{t\leq T}|G^N_t|+\sup_{t\leq T}|L^N_t|+\sqrt N \sup_{t\leq T}|E^{N,i}_t |  \right] .
\end{equation}

As
$$\sup_{t\leq T}|E_t^{N,i}|\leq \frac 1N\int_{[0,T]}\int_{\R_+}\int_{\R}|u|\indiq_{ \{ z \le L \}} \pi^i (ds,dz,du),$$
we deduce that
\begin{equation}\label{eq:bE}
\sup_{ f \in H ( \beta, l ,L) } \E_{x_0}^f [\sup_{t\leq T}|E_t^{N,i}|^p]\leq \frac 1{N^p}C_{T}(p).
\end{equation}

Moreover, for any fixed $i \neq j , $ 
$$ X_t^{N, i } - X_t^{N, j } = \int_0^t (b( X_s^{N, i } ) - b ( X_s^{N, j } ) )ds -E_t^{N,i}+E_t^{N,j} ,
$$
such that, due to the Lipschitz continuity of $b$ and applying the pathwise Gronwall lemma once more,  
$$ \sup_{ t \le T } | X^{N, i }_t - X^{N, j }_t | \le e^{ CT}(\sup_{ t \le T }|E_t^{N,i}|+\sup_{ t \le T }|E_t^{N,j}|). 
$$

Recall that by definition of $ H(\beta, l , L), $ $ \|f\|_{Lip} \le L.$ So 
$$|G^N_t|\leq \frac {L |w|}N\sum_{j=1}^N\int_0^t |X_s^{N,j}-X_s^{N,i}|ds,$$
which implies that
$$\sup_{t\leq T}|G_t^N|^p\leq C_{T} (p) \frac 1N\sum_{j=1}^N (\sup_{t\leq T}|E_t^{N,i}|^p+\sup_{t\leq T}|E_t^{N,j}|^p)$$
and finally 
\begin{equation}\label{eq:bG}
\sup_{ f \in H ( \beta, l ,L) } \E_{x_0}^f\sup_{t\leq T}|G_t^N|^p\leq C_{T} (p) \frac 1{N^p}.
\end{equation}

To conclude the proof it is therefore sufficient to prove that $ \sup_{ f \in H( \beta, l, L) } \E_{x_0}^f  \sup_{ t \le T}| L^N_t|^p < \infty $ for any $ p \geq 1.$ 

By H\"older's inequality, it suffices to consider the case $ p \geq 2.$ By the Burkh\"older-Davis-Gundy inequality of power $p,$ we have
\begin{multline*}
\E_{x_0}^f  \sup_{ t \le T}| L^N_t|^p\leq C (p) \E([L^N_t,L^N_t]^{p/2}) \\
=C (p) \E_{x_0}^f  \left[\left( \frac {1 }{N} \sum_{j=1}^N \int_{[0, t ]} \int_{\R_+} \int_{\R} u^2 1_{\{ z \le f ( X_{s-}^{N, j } )\} }  \pi^j (dt, dz, du ) \right)^{p/2}\right].
\end{multline*}
Since $ x \mapsto |x|^{p/2} $ is convex, using Jensen's inequality for the discrete measure $ \frac1N \sum_{ j= 1}^N, $ we upper bound the above expression by 
\begin{multline*}
    \E_{x_0}^f  \left[ \frac {1 }{N} \sum_{j=1}^N \left( \int_{ [0, t ] } \int_{\R_+} \int_{\R} u^2   1_{\{ z \le f ( X_{s-}^{N, j } )\} }  \pi^j (dt, dz, du ) \right)^{p/2} \right] \\
    =   \E_{x_0}^f \left( \int_{ [0, t ] } \int_{\R_+} \int_{\R} u^2   1_{\{ z \le f ( X_{s-}^{N, 1 } )\} }  \pi^1 (dt, dz, du ) \right)^{p/2} ,
\end{multline*}    
where we have used the exchangeability of the system. Since $f$ is bounded by $L,$ we also have that 
$$ \int_{ [0, t ] } \int_{\R_+} \int_{\R} u^2   1_{\{ z \le f ( X_{s-}^{N, 1 } )\} }  \pi^1 (dt, dz, du ) \le \int_{ [0, t ] } \int_{\R_+} \int_{\R} u^2   1_{\{ z \le L\}  }  \pi^1 (dt, dz, du ),$$
which is a compound Poisson process with jumps possessing all moments by our assumptions on $ \nu (du).$ As a consequence, it also possesses all moments which shows that
\begin{equation}\label{eq:bL}
\sup_{ f \in H ( \beta, l ,L) } \E_{x_0}^f \sup_{ t \le T}| L^N_t|^p\leq C_T (p).
\end{equation}
Finally from \eqref{eq:bV},\eqref{eq:bE},\eqref{eq:bG} and \eqref{eq:bL} we obtain
$$\sup_{ f \in H ( \beta, l ,L) } \E_{x_0}^f\sup_{t\leq T}|X_t^{N,i}-x_t|^p\leq C_{T} (p) N^{-p/2},$$
which finishes the proof.
$ \qed $ 
 \ec

\section*{Acknowledgments}

This work was produced as part of the activities of FAPESP  Research, Innovation and Dissemination Center for Neuromathematics (grant \# 2013/ 07699-0 , S.Paulo Research Foundation (FAPESP)) and as part of  the ANR project ANR-19-CE40-0024.
KL was successively supported by FAPESP fellowship (grant 2022/07386-0 and 2023/12335-9).

We thank two anonymous reviewers for their careful reading and for many interesting questions and remarks that helped us a lot to improve a first version of the manuscript.

\end{document}